\newtheorem{theorem}{Theorem}[section]
\newtheorem{lemma}[theorem]{Lemma}
\newtheorem{proposition}[theorem]{Proposition}
\newtheorem{corollary}[theorem]{Corollary}
\newtheorem{remark}[theorem]{Remark}
\renewcommand{\d}{\delta}
\newcommand{\g}{\gamma}
\newcommand{\D}{\Delta}
\newcommand{\p}{\partial}
\journal{Journal of TBA}
\begin{document}

\begin{frontmatter}

\title{On the Boundary Polynomial of a Graph}


\author[FIU]{Walter Carballosa}
\ead{wcarball@fiu.edu}

\author[FIU]{Marcos Masip}
\ead{mmasi013@fiu.edu}

\author[BC]{Francisco A. Reyes}
\ead{freyes@broward.edu}

\address[FIU]{Department of Mathematics and Statistics, Florida International University,
Miami, Florida 33199, USA}

\address[BC]{Mathematics Department, Central Campus, Broward College, Davie, Florida 33314, USA}


\begin{abstract}
In this work, we introduce the boundary polynomial of a graph $G$ as the ordinary generating function in two variables $B(G;x,y):= \displaystyle\sum_{S\subseteq V(G)} x^{|B(S)|}y^{|S|}$, where $B(S)$ denotes the outer boundary of $S$.
We investigate this graph polynomial obtaining some algebraic properties of the polynomial. We found that some parameters of $G$ are algebraically encoded in $B(G;x,y)$, \emph{e.g.}, domination number, Roman domination number, vertex connectivity, and differential of the graph $G$. Furthermore, we compute the boundary polynomial for some classic families of graphs. We also establish some relationships between $B(G;x,y)$ and $B(G^\prime;x,y)$ for the graphs $G^\prime$ obtained by removing, adding, and subdividing an edge from $G$. In addition, we prove that a graph $G$ has an isolated vertex if and only if its boundary polynomial has a factor ($y+1$). Finally, we show that the classes of complete, complete without one edge, empty, path, cycle, wheel, star, double-star graphs, and many others are characterized by the boundary polynomial.
\end{abstract}

\begin{keyword}
\texttt{graph polynomial; generating function; boundary of a vertex set; vertex connectivity; vertex domination.}
\MSC[2017] 05C31\sep 05C69
\end{keyword}

\end{frontmatter}


\section{Introduction.}
Polynomials related to a graph have been a powerful tool to study structural, topological, and combinatorial properties of the graph. 
Graph polynomials have been widely studied since, in 1912, George D. Birkhoff introduced the chromatic polynomial in an attempt to prove the four-color theorem \cite{Bi}. 
Later in 1932 the chromatic polynomial was generalized to the Tutte--Whitney polynomial (or Tutte polynomial, or dichromatic polynomial), a polynomial that plays an important role in graph theory.

 In particular, graph polynomials are interesting when they encode much or essential information about the underlying graph. Several of the well-known polynomials are defined or can be written as an ordinary generating function of graph parameter(s), for example, the polynomials associated with chromatic numbers \cite{R,T}, defensive alliances \cite{CHRT,CRST}, differential of vertex sets \cite{BCLS}, domination sets \cite{AB}, independent sets \cite{BDN,GH}, induced subgraphs \cite{TAM}, matching sets \cite{F,GG}, and many others.

 The \emph{outer boundary} of a subset $S$ of vertices in a graph $G$ is the set of vertices in $G$ that are adjacent to vertices in $S$, but not in $S$ themselves. The \emph{inner boundary} is the set of vertices in $S$ that have a neighbor outside $S$. The \emph{edge boundary} is the set of edges with one endpoint in the inner boundary and one endpoint in the outer boundary. In this work, we choose the outer boundary of the vertex sets of a graph and its cardinality to define the boundary polynomial of a graph as an ordinary generating function in two variables.

We begin by stating the terminology used.
Throughout this paper, $G=(V,E)$ will denote a simple graph with order $n:=|V|$ and size $m:=|E|$. 
We write $u\sim v$ whenever $u$ and $v$ are adjacent vertices in $G$.
The edge joining these vertices is denoted by $uv$.
For a nonempty set $X\subseteq V$, and a vertex $v\in V$, $N_X(v)$ denotes
the set of neighbors that $v$ has in $X$, \emph{i.e.} $N_X(v):=\{u\in X: u\sim v\}$;
the degree of $v$ in $ X$  is denoted by $d_{X}(v):=|N_{X}(v)|$.
For a vertex $v\in V$, $N(v)$ denotes the set of all neighbors that $v$ has
in $V$, \emph{i.e.}, $N(v):=\{u\in V \; | \; u\sim v\}$; and $N[v]$ denotes
the closed neighborhood of the vertex $v$, \emph{i.e.}, $N[v]:= N(v)\cup \{v\}$.
We denote by $d_G(v_i):=|N(v_i)|$ the degree of a
vertex $v_i\in V$ in $G$, and by $\d(G), \D(G)$ the minimum and maximum degree of a vertex in $G$, respectively; when $G$ is clearly determined in the context, $G$ may be omitted.

The complement of $S\subseteq V$ is
denoted by $\overline{S}:=V\setminus S$ and the complement of $G$ is denoted by $\overline{G}$.
The subgraph induced by $S$ is denoted by $\langle S\rangle$. 
For $X,Y\subseteq V$, $E(X,Y)$ denotes the set of edges joining a vertex in $X$ with a vertex in $Y$.
For a vertex $v\in V$, the graph obtained from $G$ by removing the vertex $v$ and all edges joined to $v$ is denoted by $G-v$, and similarly, $G-e$ denotes the graph obtained from $G$ by removing the edge $e\in E$. 
A set of vertices $S\subseteq V$ is said to be \emph{dominating} if every vertex $v\in V\setminus S$ satisfies $N_{S}(v)\neq\emptyset$. The \emph{domination number}, $\g(G)$, is the minimum cardinality of a dominating set in $G$. 
A graph is said to be \emph{disconnected} if there are two vertices $u,v\in V$ with no path joining the vertices $u$ and $v$; otherwise, $G$ is \emph{connected}. If $G$ is disconnected, a \emph{connected component of} $G$ is a maximal (by inclusion) connected induced subgraph of $G$. 
A \emph{cut vertex} of a graph $G$ is a vertex $v$ such that the graph $G-v$ has more connected components than $G$. In particular, a cut vertex of a connected graph is a vertex whose deletion results in a disconnected graph.
A \emph{vertex cut} or \emph{separating set} of a graph $G$ is a set of vertices whose removal increases the number of connected components or reduces a connected component to $E_1$. If $G$ is connected, removing a vertex cut makes $G$ disconnected or reduced to $E_1$. The \emph{vertex connectivity} of $G$, denoted by $k_v(G)$, is the size of a smallest vertex cut. A graph is called $k$-\emph{vertex-connected} or $k$-\emph{connected} if its vertex connectivity is $k$ or greater.

As usual, ${\bf 1}_X$ denotes the \emph{indicator function} of $X$, \emph{i.e.}, ${\bf 1}_X(x)=1$ if $x\in X$ and ${\bf 1}_X(x)=0$ otherwise. 
We denote the polynomial coefficient of the term $x^k$ in a polynomial of two variables $P(x,y)$ by $[x^k]P(x,y)$, the polynomial coefficient of the term $y^r$ by $[y^r]P(x,y)$ and the coefficient of the term $x^ky^r$ by $[x^ky^r]P(x,y)$.

In the next section, we introduce the boundary polynomial, obtaining some algebraic properties of this polynomial of general graphs. We will study its coefficients and show some numerical evaluations related to graph parameters.
In Section 3, we investigate the distortion of the boundary polynomial under certain transformations such as the join of two graphs, the corona product of graphs, vertex addition, removal of an edge, and edge subdivision. In addition, we will compute the boundary polynomial of some classic graphs, \emph{e.g.}, complete, complete without an edge, complete bipartite, empty, path, cycle, wheel, star, and double-star graphs. 
Furthermore, we prove that complete, complete without an edge, empty, path, cycle, wheel, star, and double star graphs are characterized by their boundary polynomials.

\section{The Boundary Polynomial of a Graph}\label{sect:Poly}
Let $G$ be a graph of order $n$. We define the \emph{boundary polynomial} of $G$ with variable $x$ and $y$ as follows:
\begin{equation}\label{eq:Poly1}
    B(G;x,y) = \sum_{S \subseteq V(G)}x^{|B(S)|}y^{|S|}.
\end{equation}

Another way to define $B(G;x,y)$ is

\begin{equation}\label{eq:Poly2}
    B(G;x,y) = \sum_{0 \leq i+j \leq n}B_{i,j}(G)x^{i}y^{j},
\end{equation}

where $B_{i,j}(G)$ represents the number of subsets of the vertex in $G$ with $|B(S)| = i$ and $|S| = j$. As long as there is no confusion, we will omit the reference to $G$ in the coefficients $B_{i,j}$. 
Indeed, the boundary polynomial is directly related to the differential polynomial in one variable, defined in \cite{BCLS}, when we make $y:=x^{-1}$, \emph{i.e.}, 
\begin{equation}\label{eq:1var_2var}
  B(G;x)=x^n\,B(G;x,x^{-1}).
\end{equation}
We recall that a {\em Roman dominating function} on $G$ is a function $f:V(G)\rightarrow \{0,1,2\}$ satisfying the condition that every vertex $u$ for which $f(u)=0$ is adjacent to at least one vertex $v$ for which $f(v)=2$. The \emph{weight} of a Roman dominating function $f$ is the value $w(G,f)=\sum_{u\in V(G)}f(u)$. The minimum weight of a Roman dominating function in $G$ is the {\em Roman domination number} of $G$ and is denoted by $\g_R(G)$. 
We also recall that \emph{differential of a graph} $G$, usually denoted by $\partial(G)$, is the maximum of $|B(S)|-|S|$ over all vertex sets $S\subseteq V$. In \cite{BFS}, Bermudo \emph{et al.} proved that for every graph $G$ we have $\g_R(G)+\partial(G)=n$. 
Thus, Basilio \emph{et al.}, in \cite{BCLS}, showed that the degree of the differential polynomial in one variable relates the Roman domination number and the differential of the graph, \emph{i.e.}, $deg\big(B(G;x)\big)=\gamma_R(G)+2\partial(G)$. Therefore, we have
\begin{equation}\label{eq:Poly_deg}
    \partial(G) = deg\big(B(G;x,x^{-1})\big) \qquad \text{ and } \qquad \gamma_R(G) = deg\big(B(G;x,x)\big)- deg\big(B(G;x,x^{-1})\big).
\end{equation}

\medskip
%
%
%
%
%
%
%

Note that given the adjacency matrix of a graph $G$, we can compute the cardinality and outer boundary of each subset of vertices and obtain the boundary polynomial in computational order $O(n^22^n)$. 
For example, we compute the boundary polynomial $B(K_{4};x,y)$ of the complete graph $K_{4}$: When $S = \emptyset$, we have $|B(S)| = |S| = 0$. Now, considering $\emptyset \not= S \subseteq V(K_{4})$ we note that for $i \in \{1,2,3,4\}$ there are $\binom{4}{i}$ subsets of $V(K_4)$ with cardinality $i$, it follows easily that  $B(K_{4};x,y) = 4x^{3}y + 6x^{2}y^{2} + 4xy^{3} + y^{4} + 1$. Notice that $x^{4}B(K_{4};x,x^{-1}) = B(K_{4};x) = 4x^{6}+7x^{4}+4x^{2}+1$, which is the differential polynomial of $K_{4}$ of one variable \cite{BCLS}.

We recall that two graphs $G_1$ and $G_2$ are \emph{isomorphic}, denoted $G_1 \simeq G_2$, if there exists a bijection $f: V(G_1)\to V(G_2)$ such that any two vertices $u$ and $v$ of $G_1$ are adjacent if and only if $f(u)$ and $f(v)$ are adjacent in $G_2$. Note that if $G_1$ and $G_2$ are isomorphic graphs, then $B(G_1; x,y) = B(G_2; x,y)$. 
Similarly in \cite[Theorem 2.9]{BCLS}, we have the following result.

\begin{theorem}\label{t:factors}
  Let $G$ be a non-connected graph with $k>1$ connected components, ${G_1,G_2,\ldots,G_k}$, then we have
\begin{equation}\label{eq:factors}
  B(G;x,y)=\displaystyle\prod_{i=1}^{k} \,B(G_i;x,y).
\end{equation}
\end{theorem}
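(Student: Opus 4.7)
The plan is to use the standard generating-function trick of turning a sum indexed by subsets of a disjoint union into a product of sums, so the whole argument hinges on a combinatorial identity for outer boundaries.

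First I would record that since $G_1,\ldots,G_k$ are the connected components of $G$, the vertex set decomposes as $V(G)=V(G_1)\sqcup\cdots\sqcup V(G_k)$, and therefore every $S\subseteq V(G)$ corresponds bijectively to a $k$-tuple $(S_1,\ldots,S_k)$ with $S_i:=S\cap V(G_i)\subseteq V(G_i)$. Clearly $|S|=\sum_{i=1}^{k}|S_i|$.

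The main (and only nontrivial) step is to show that
\[
  B(S)=\bigsqcup_{i=1}^{k} B_{G_i}(S_i),
\]
where the union is disjoint. Disjointness is automatic because $B_{G_i}(S_i)\subseteq V(G_i)$ and the $V(G_i)$ are pairwise disjoint. For set equality, I would use the defining property of outer boundary: $v\in B(S)$ iff $v\notin S$ and $v$ has a neighbor in $S$. If $v\in V(G_j)$ for some $j$, then because there are no edges between different connected components, every neighbor of $v$ in $G$ lies in $V(G_j)$; thus $v$ has a neighbor in $S$ iff it has a neighbor in $S_j=S\cap V(G_j)$, and $v\notin S$ iff $v\notin S_j$. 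Hence $v\in B(S)\iff v\in B_{G_j}(S_j)$, which yields the claimed decomposition and in particular $|B(S)|=\sum_{i=1}^{k}|B_{G_i}(S_i)|$.

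Finally, I would substitute into \eqref{eq:Poly1} and factor:
\[
  B(G;x,y)=\sum_{S\subseteq V(G)} x^{|B(S)|}y^{|S|}
  =\sum_{S_1\subseteq V(G_1)}\!\!\cdots\!\!\sum_{S_k\subseteq V(G_k)}\prod_{i=1}^{k} x^{|B_{G_i}(S_i)|}y^{|S_i|}
  =\prod_{i=1}^{k}\Bigl(\sum_{S_i\subseteq V(G_i)} x^{|B_{G_i}(S_i)|}y^{|S_i|}\Bigr)
  =\prod_{i=1}^{k}B(G_i;x,y),
\]
which is \eqref{eq:factors}. I do not expect any real obstacle here; the only point one has to be careful about is noting that the absence of cross-component edges is precisely what forces the outer boundary to split componentwise, and by induction on $k$ the statement reduces to the two-component case if a cleaner write-up is preferred.
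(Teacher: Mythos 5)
Your proposal is correct and follows essentially the same route as the paper: both decompose each $S\subseteq V(G)$ into the tuple $(S_1,\ldots,S_k)$ with $S_i=S\cap V(G_i)$ and then factor the generating sum into a product over components. The only difference is that you explicitly verify the key identity $B(S)=\bigsqcup_{i}B_{G_i}(S_i)$ (which the paper leaves implicit), making your write-up slightly more complete.
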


\begin{proof}
Consider the bijection $\phi:\mathcal{P}\big(V(G)\big)\to{V(G_1)\times V(G_2)\times\ldots\times V(G_k)}$, defined for every $S\subseteq V(G)$ as follows that
\[
 \phi(S)=(S_1,S_2,\ldots,S_k) \quad \text{ where } \quad S_i:=S\cap V(G_i) \quad\text{for} \quad 1\le i\le k.
\]
Then, using associativity, commutativity and distributivity properties of real numbers we have
\begin{align*}
    B(G;x,y)=\displaystyle\sum_{S\subseteq V(G)} x^{|B(S)|}\, y^{|S|} = \sum_{\phi(S)\subseteq \phi\big(V(G)\big)} \left( \prod_{i=1}^{k} x^{|B(S_i)|}\, y^{|S_i|} \right)= \prod_{i=1}^{k} \left(\sum_{S\subseteq V(G_i)} x^{|B(S)|}\, y^{|S|} \right)
\end{align*}
\end{proof}

The following theorem shows how the coefficients of $B(G;x,y)$ can provide useful information about $G$. In the remainder of the paper, we adopt the convention that $\binom{k}{r}=0$ when $k<r$.

\begin{theorem}\label{t:coef}
Let $G$ be a graph of order $n$. The coefficients of $B(G;x,y)$ satisfy the following properties:
 \begin{enumerate}[i)]
    \item $B_{0,j}(G) = B_{0,n-j}(G)$ for $0 \leq j \leq n$ and $B_{0,0}(G) = B_{0,n}(G) = 1$.
 
     \item $G$ is connected if and only if $B_{0,j}(G) = 0$ for $1 \leq j \leq n-1$. 
     
     \item $B_{0,1}(G) = B_{0,n-1}(G)$ is the number of isolated vertices of $G$.
     
     \item $B_{d,1}(G)$ is the number of vertices of $G$ with degree $d$. 
     
     \item $n=[y]B(G;x,y)\big|_{x=1}$. 
     
     \item $m=\displaystyle\frac12\,\left.\frac{\text{d}\big([y]B(G;x,y)\big)}{\text{d}x}\right|_{x=1}$

     \item The number of components of $G$ isomorphic to $P_{2}$ is $p:=B_{0,2}(G)-\binom{B_{0,1}(G)}{2}$.

     \item The number of components of $G$ isomorphic to $P_{3}$ or $C_{3}$ is $q:=B_{0,3}(G) - \binom{B_{0,1}(G)}{3} - p\cdot B_{0,1}(G)$.

     \item $\g(G)$ is the minimum $1\le k\leq n-1$ such that $[x^{n-k}y^{k}]B(G;x,y)\neq0$.

     \item If $G$ is connected, then $k_v(G)$ is the minimum $1\le k\le n-1$ such that $\displaystyle\left.\frac{\text{d}^{k} B(G;x,y)}{\text{d}x^k}\right|_{(x,y)=(0,1)}\neq0$.

     \item For every $k\leq\d$, we have $[x^ky^{n-k}]B(G;x,y)=\binom{n}{k}$; furthermore, $[x^ry^{n-k}]B(G;x,y)=0$ if $r\neq k$.
 \end{enumerate}
\end{theorem}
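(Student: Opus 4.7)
Each item in the theorem asserts either a formula for a specific coefficient $B_{i,j}(G)$ of $B(G;x,y)$, or an algebraic extraction involving the polynomial. The uniform plan is to use the identity $B_{i,j}(G)=\#\{S\subseteq V(G):|S|=j,\ |B(S)|=i\}$ to translate each statement into a subset count, and then identify those subsets by structural properties of $G$. Two observations recur throughout: $B(S)=\emptyset$ exactly when $S$ is a (possibly empty) union of connected components of $G$, and for a singleton $S=\{v\}$ one has $|B(S)|=d(v)$.

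With these tools, parts (i)--(vi) fall out quickly. The complementation bijection $S\mapsto V\setminus S$ preserves $B(S)=\emptyset$ and swaps $|S|$ with $n-|S|$, which gives (i); (ii) is the observation that a nontrivial empty-boundary set exists if and only if $G$ is disconnected; (iii) identifies the singletons with empty boundary as the isolated vertices, and (iv) follows from $|B(\{v\})|=d(v)$; extracting $[y]B(G;x,y)=\sum_{v\in V}x^{d(v)}$ and evaluating or differentiating at $x=1$ yields (v) and (vi) via the handshake identity $\sum_v d(v)=2m$. For (vii) and (viii), I would classify empty-boundary subsets of size $2$ and $3$ as disjoint unions of components of total size $2$ or $3$ (two isolated vertices or a $P_2$-component; three isolated vertices, a $P_2$-component plus an isolated vertex, or a single order-$3$ component, which must be $P_3$ or $C_3$) and subtract the already-counted configurations to isolate $p$ and $q$.

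Part (ix) relies on the equivalence $|S|+|B(S)|=n \iff V\setminus S\subseteq B(S) \iff S$ is dominating, so $[x^{n-k}y^{k}]B(G;x,y)$ counts dominating sets of cardinality $k$ and the minimum such $k$ is $\gamma(G)$. For (xi), when $|S|=n-k$ with $k\le\delta$, every vertex $v\notin S$ has at most $k-1$ of its neighbors inside $(V\setminus S)\setminus\{v\}$ while $d(v)\ge\delta\ge k$, so it must have a neighbor in $S$; consequently $B(S)=V\setminus S$ and $|B(S)|=k$, which forces $[x^{k}y^{n-k}]B(G;x,y)=\binom{n}{k}$ and annihilates $[x^{r}y^{n-k}]B(G;x,y)$ for $r\ne k$.

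The main obstacle is (x). A direct computation shows $\left.\partial^{k}B/\partial x^{k}\right|_{(0,1)}=k!\cdot\#\{S:|B(S)|=k\}$, so the claim reduces to equating $k_v(G)$ with the smallest positive outer-boundary cardinality. For the upper bound I would take a minimum vertex cut $C$ with $|C|=k_v(G)$, let $S$ be the vertex set of one component of $G-C$, and use the minimality of $C$ to conclude $B(S)=C$, hence $|B(S)|=k_v(G)$. For the lower bound, any nontrivial $S$ with $|B(S)|=k$ satisfies one of two alternatives: either $V\setminus(S\cup B(S))\ne\emptyset$, in which case $B(S)$ separates $S$ from this remainder and so is a vertex cut of size $\ge k_v(G)$; or $S\cup B(S)=V$, which must be handled via the paper's convention that a vertex cut may also reduce a component to $E_1$. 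Threading the argument cleanly through this last case, and confirming that every minimum vertex cut is actually realized as the outer boundary of a component of its removal, is the delicate step.
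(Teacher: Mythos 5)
Your treatment of parts i)--ix) and xi) coincides with the paper's proofs essentially line by line: the complementation bijection for i), the characterization of empty-boundary sets as unions of components for ii), vii) and viii), the singleton/degree identification for iii)--vi) together with the handshake identity, the equivalence ``$S$ is dominating $\iff B(S)=\overline{S}$'' for ix), and the minimum-degree counting argument for xi). Nothing to add there.

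For part x), the gap you flag is real, and it is not merely a delicate step to be threaded --- it is fatal to the statement as written, and the paper's own proof has the same hole. As you compute, $\left.\frac{\text{d}^{k} B(G;x,y)}{\text{d}x^k}\right|_{(0,1)}=k!\sum_{j}B_{k,j}(G)$, so the extraction counts \emph{all} sets with $|B(S)|=k$, including those with $S\cup B(S)=V$. For every connected graph of order $n\ge 2$ and every vertex $v$, the set $S=V\setminus\{v\}$ satisfies $B(S)=\{v\}$; hence $B_{1,n-1}(G)=n\neq 0$ and the minimum $k$ in the statement is always $1$, whereas $k_v(C_4)=2$ and $k_v(K_n)=n-1$. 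The paper's argument tacitly restricts attention to monomials $x^ky^r$ with $k+r<n$ (where $B(S)$ genuinely is a vertex cut), but the operator $\frac{\text{d}^k}{\text{d}x^k}\big|_{(0,1)}$ performs no such restriction. Your suspicion that the case $S\cup B(S)=V$ cannot be absorbed by the ``reduces a component to $E_1$'' convention is also correct: removing $B(S)=\{v\}$ from $K_n$ with $n\ge3$ leaves $K_{n-1}\neq E_1$, so that boundary is not a vertex cut at all. A correct version of x) must first discard the coefficients $B_{k,j}$ with $k+j=n$ --- i.e., take the minimum $k$ for which some $B_{k,j}\neq0$ with $k+j<n$, defaulting to $n-1$ when no such term exists --- and with that modification both your vertex-cut argument (minimality of the cut $C$ gives $B(S)=C$ for each component $S$ of $G-C$) and the paper's go through.
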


\begin{proof}
We prove separately each item.

\begin{enumerate}[$i)$]
  \item {Let $S$ be a subset of vertices in $G$ such that $|S| = j$ and $|B(S)| = 0$. Hence, $S$ has no adjacent vertex in $\overline{S}$; therefore, $\overline{S}$ has no adjacent vertex in $S$ as well. Indeed, for every $S\subseteq V(G)$ counted in $B_{0,j}(G)$, $\overline{S}$ is counted in $B_{0,n-j}(G)$, and vice versa. Thus, $B_{0,j}(G) = B_{0,n-j}(G)$ for $1 \leq j \leq n-1$. On the other hand, $B_{0,0}(G) = 1$ since it only counts $S=\emptyset$, and $B_{0,n}(G)=1$ only counts $S=V(G)$.
  }

  \item {If $G$ is connected, then such a vertex set $S$ as in part i) does not exist, so $B_{0,j}(G) = 0$ for $1 \leq j \leq n-1$. If $G$ is not connected, then there is $\emptyset\neq S\subseteq V(G)$ having no adjacent vertex in $\overline{S}$; therefore, $B_{0,j}(G) > 0$ for some $1 \leq j \leq n-1$.}

  \item {Clearly, $B_{0,1}(G)$ counts all vertices with no neighbors, so it is exactly the number of isolated vertices in $G$. The equality follows from Theorem \ref{t:coef} part i).}

  \item {Similarly to part iii) above, $B_{d,1}(G)$ counts all vertices with $d$ neighbors in $G$.}

  \item {By Theorem \ref{t:coef} part iv), the sum of all coefficients of the polynomial in $x$ that is the coefficient of $y$ in $B(G;x,y)$ is the total number of vertices in $G$, therefore, $n$ equals the evaluation of the polynomial $[y]B(G;x,y)$ at $x=1$.}

  \item {By the Handshake Lemma the size of $G$ is half of the sum of the degree of the vertices of $G$. Thus, it is 
  \[
    n=\displaystyle\frac12\left(\sum_{1}^{n-1} B_{i,1}(G)\cdot i\right)=\frac12\left(\sum_{1}^{n-1} B_{i,1}(G)\cdot i\cdot1^{i-1}\right)=\frac12\,\left.\frac{\text{d}\big([y]B(G;x,y)\big)}{\text{d}x}\right|_{x=1}
  \]
  }

  \item {By definition $B_{0,2}(G)$ counts all $S\subseteq V(G)$ with $|S|=2$ and $E(S,\overline{S})=\emptyset$. Then, $B_{0,2}(G)$ counts all connected components of $G$ isomorphic to $P_2$ as well as all couples of isolated vertices of $G$.}

  \item {By definition $B_{0,3}(G)$ counts all $S\subseteq V(G)$ with $|S|=3$ and $E(S,\overline{S})=\emptyset$. Then, $B_{0,3}(G)$ counts all the connected components of $G$ isomorphic to $P_{3}$ or $C_{3}$ as well as all sets of three isolated vertices of $G$ and pairs of isolated vertices with a connected component isomorphic to $P_2$.}

  \item This result follows from the fact that $S\subseteq V$ is dominating if and only if $B(S)=\overline{S}$. So, $\g(G)$ is the smallest $k$ such that the term $x^{n-k}y^k$ appears in $B(G;x,y)$.

  \item {Since $G$ is connected $B_{1,n-1}(G)=n\neq0$, thus, such a minimum for $k$ exists. Let $x^k\,y^r$ be the variable part of a monomial with non-zero coefficient in $B(G;x,y)$ such that $k+r< n$. Hence, there is $S\subseteq V(G)$ such that $|S|=r$, $|B(S)|=k$ and $\hat{S}:=V(G)\setminus\big(S\cup B(S)\big)\neq\emptyset$. Thus, $B(S)$ is a vertex cut. Conversely, if $B(S)$ is a vertex cut with $|S|=r$ and $|B(S)|=k$, then $B_{k,r}(G)\neq0$ and $k+r<n$. Therefore, if $k+r<n$, the minimum $k$ holds when $|B(S)|$ is minimum, \emph{i.e.}, $r=k_v(G)$. If $B_{k,r}(G)=0$ for every $1\le r,k\le n-1$ such that $k+r<n$, then $G$ has no vertex cut, $G\simeq K_n$ and consequently $k_v(G)=n-1$ which is the minimum $k$.}

  \item {Consider $S\subseteq V(G)$ with $|S|=n-k\geq n-\d$, then we have $\left|\overline{S}\right|\leq\d$. Therefore, $B(S)=\overline{S}$ since every vertex $v\in\overline{S}$ has a neighbor in $S$ since $k\leq\d$.}
\end{enumerate}
\end{proof}

The following results are well-known, which can also be obtained from the algebraic properties of the boundary polynomial of disconnected graphs. They are a direct consequence of Theorems \ref{t:factors} and \ref{t:coef} and \cite[Theorem 1]{BFS}.

\begin{corollary}\label{c:factors}
   Let $G$ be a non-connected graph with $k>1$ connected components, ${G_1,G_2,\ldots,G_k}$, then we have
  \begin{equation}\label{eq:c_factors}
   \begin{aligned}
       \p(G)&=\displaystyle\sum_{i=1}^{k} \p(G_i) \\
       \g_R(G)&=\displaystyle\sum_{i=1}^{k} \g_R(G_i) \\
       \g(G)&=\displaystyle\sum_{i=1}^{k} \g(G_i) \\
       k_v(G)&=\displaystyle\min_{1\le i\le k} k_v(G_i)
   \end{aligned}
  \end{equation}
\end{corollary}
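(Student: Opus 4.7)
The plan is to derive each identity from the factorization $B(G;x,y) = \prod_{i=1}^{k} B(G_i;x,y)$ of Theorem \ref{t:factors}, combined with the parameter-readings of $B(G;x,y)$ given in Theorem \ref{t:coef} and the identity $\gamma_R(H) + \partial(H) = |V(H)|$ of \cite[Theorem 1]{BFS}.

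For the $\partial$ equality, I would substitute $y = x^{-1}$ in the factorization to obtain $B(G;x,x^{-1}) = \prod_i B(G_i;x,x^{-1})$. Since all coefficients $B_{i,j}(G_\ell)$ are non-negative integers, the (Laurent) degree in $x$ is additive under this product, and equation \eqref{eq:Poly_deg} then gives $\partial(G) = \sum_i \partial(G_i)$. The $\gamma_R$ equality is now a one-line consequence: applying $\gamma_R + \partial = n$ to $G$ and to each $G_i$ and summing yields $\sum_i \gamma_R(G_i) = \sum_i n_i - \sum_i \partial(G_i) = n - \partial(G) = \gamma_R(G)$.

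For $\gamma$, I would use Theorem \ref{t:coef}(ix) and expand the relevant coefficient through the factorization:
\[
  [x^{n-j}y^{j}]B(G;x,y) = \sum \prod_{\ell=1}^{k} B_{i_\ell,j_\ell}(G_\ell),
\]
where the sum is over tuples with $\sum_\ell i_\ell = n - j$ and $\sum_\ell j_\ell = j$. Because $B_{i,j}(G_\ell) = 0$ whenever $i + j > n_\ell$ and $\sum_\ell (i_\ell + j_\ell) = n = \sum_\ell n_\ell$, every contributing tuple must satisfy $i_\ell + j_\ell = n_\ell$ for every $\ell$, meaning each component-selection $S_\ell$ is a dominating set of $G_\ell$. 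Minimizing $\sum_\ell j_\ell$ subject to this constraint and applying Theorem \ref{t:coef}(ix) componentwise then yields $\gamma(G) = \sum_\ell \gamma(G_\ell)$, and positivity of the corresponding product at the minimizing tuple is automatic since every factor is a positive integer.

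For $k_v$, Theorem \ref{t:coef}(x) is stated only for connected graphs, so I would finish with a short combinatorial argument. A minimum cut of any single $G_j$ is already a cut of $G$, yielding $k_v(G) \le \min_j k_v(G_j)$. For the reverse inequality, take a minimum cut $U$ of $G$, set $U_i := U \cap V(G_i)$, and use the decomposition $c(G - U) = \sum_i c(G_i - U_i)$, where $c(\cdot)$ counts connected components with the convention $c(\emptyset) = 0$: the condition that $U$ increases the component count of $G$ or reduces a component to $E_1$ forces the same to happen for some $G_i$ under $U_i$, so $|U| \ge |U_i| \ge k_v(G_i)$. The subtle point, and the main obstacle, is the ``reduces to $E_1$'' clause for a complete component $G_i = K_{n_i}$, where the bookkeeping closes exactly because $k_v(K_{n_i}) = n_i - 1$. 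This $k_v$ piece is the only part of the corollary that does not flow transparently from the polynomial factorization.
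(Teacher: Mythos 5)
Your proof is correct and follows exactly the route the paper itself takes: the paper offers no written proof of this corollary, declaring it a direct consequence of Theorems \ref{t:factors} and \ref{t:coef} together with $\gamma_R+\partial=n$ from \cite{BFS}, which is precisely the derivation you carry out in detail. Your observation that the $k_v$ identity needs a separate combinatorial argument (since Theorem \ref{t:coef} part x) is stated only for connected graphs) is a fair and correct refinement of a detail the paper glosses over.
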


The following result shows that the boundary polynomial encodes basic information about the graph as seen through simple evaluations.

\begin{theorem}\label{t:eval}
 Let $G$ be a graph with order $n$ and size $m$. Then, $B(G;x,y)$ satisfies the following properties:

   \begin{enumerate}[i)]
     \item $B(G;1,y)=(1+y)^n$, in particular, we have $B(G;1,-1)= 0$ and $B(G;1,1)=2^n$.

     \item If $G$ has $k\ge1$ connected components with respective orders $n_1,\ldots,n_k$. Then the number of connected components of $G$ is $k=\log_2 B(G;0,1)$ and the orders of the components of $G$ are determined by its factorization $B(G;0,y)=\displaystyle\prod_{i=1}^{k}\,(1+y^{n_i})$.

     \item $m=\displaystyle\frac{\;\displaystyle\left.\frac{\text{d}[y^2]B(G;x,y)}{\text{d}x}\right|_{x=1}-\frac12\,\left.\frac{\text{d}^2\big([y]B(G;x,y)\big)}{\text{d}x^2}\right|_{x=1}\;}{2(n-2)}$

   \end{enumerate}
\end{theorem}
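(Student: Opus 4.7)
The plan is to handle each evaluation by substituting directly into the defining sum \eqref{eq:Poly1}. For part (i), setting $x=1$ collapses the $x$-weight, so $B(G;1,y) = \sum_{S \subseteq V(G)} y^{|S|} = \sum_{j=0}^{n} \binom{n}{j} y^{j} = (1+y)^n$ by the binomial theorem, and the evaluations at $y=\pm 1$ are immediate. For part (ii), setting $x=0$ retains only the terms with $|B(S)|=0$; a subset $S$ has empty outer boundary precisely when it is a union of whole connected components of $G$, so the surviving subsets are the $2^k$ unions $\bigcup_{i\in J} V(G_i)$ indexed by $J \subseteq \{1,\ldots,k\}$, and the restricted sum factors as $\sum_{J} \prod_{i\in J} y^{n_i} = \prod_{i=1}^{k}(1+y^{n_i})$. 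Evaluating at $y=1$ then gives $B(G;0,1) = 2^k$, so $k = \log_2 B(G;0,1)$, and the component orders $n_i$ are read off from the factorization of $B(G;0,y)$ in $\mathbb{Z}[y]$.

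For part (iii), I would first isolate the two coefficient polynomials involved. Since singletons are the only vertex sets of size $1$ and unordered pairs the only sets of size $2$, we have $[y]B(G;x,y) = \sum_{v\in V} x^{d(v)}$ and $[y^2]B(G;x,y) = \sum_{\{u,v\}\subseteq V} x^{|B(\{u,v\})|}$. Differentiating and evaluating at $x=1$ yields $\tfrac{d^2}{dx^2}[y]B\bigr|_{x=1} = \sum_{v} d(v)(d(v)-1) = 2\sum_{v}\binom{d(v)}{2}$ and $\tfrac{d}{dx}[y^2]B\bigr|_{x=1} = \sum_{\{u,v\}} |B(\{u,v\})|$, so the stated formula reduces to a purely combinatorial identity relating $m$, the sum of degree squares, and the total outer-boundary size over pairs.

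The hard part is evaluating $\sum_{\{u,v\}} |B(\{u,v\})|$ in closed form. I would case-split on adjacency: for $u\not\sim v$, $B(\{u,v\}) = N(u)\cup N(v)$, giving $|B(\{u,v\})| = d(u)+d(v) - |N(u)\cap N(v)|$; for $u\sim v$, the vertices $u,v$ themselves lie in $N(u)\cup N(v)$ and must be excised, giving $|B(\{u,v\})| = d(u)+d(v) - |N(u)\cap N(v)| - 2$. A handshake double-count gives $\sum_{\{u,v\}} (d(u)+d(v)) = (n-1)\sum_{v} d(v) = 2m(n-1)$; a cherry count (paths of length two centered at each vertex $w$) gives $\sum_{\{u,v\}} |N(u)\cap N(v)| = \sum_{w} \binom{d(w)}{2}$; and the extra $-2$ over adjacent pairs contributes $-2m$ in total. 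Combining yields $\sum_{\{u,v\}} |B(\{u,v\})| = 2m(n-2) - \sum_{w}\binom{d(w)}{2}$, and rearrangement using $\sum_{w}\binom{d(w)}{2} = \tfrac{1}{2}\sum_{v} d(v)(d(v)-1)$ isolates $m$ as in the statement, up to a sign check on the degree-squared term which I would verify on small examples such as $P_3$ or $P_4$ before finalizing.
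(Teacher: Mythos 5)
Your proposal is correct and follows essentially the same route as the paper in all three parts: direct substitution into the defining sum for (i), the ``empty outer boundary $\Leftrightarrow$ union of whole components'' observation for (ii), and for (iii) the same case split on adjacency leading to the identity $\sum_{|S|=2}|B(S)|=2m(n-2)-\sum_{v}\binom{d(v)}{2}$. Your instinct to run a final sign check is warranted and in fact catches a real error in the printed statement: since $\tfrac12\left.\frac{\mathrm{d}^{2}[y]B(G;x,y)}{\mathrm{d}x^{2}}\right|_{x=1}=\sum_{v}\binom{d(v)}{2}$, the numerator in (iii) must be $\left.\frac{\mathrm{d}[y^{2}]B(G;x,y)}{\mathrm{d}x}\right|_{x=1}+\tfrac12\left.\frac{\mathrm{d}^{2}[y]B(G;x,y)}{\mathrm{d}x^{2}}\right|_{x=1}$ with a plus sign rather than the minus sign shown (for $P_{3}$ these two quantities are $3$ and $1$, and $(3+1)/2=2=m$ whereas $(3-1)/2=1\neq m$; the paper's own proof derives the correct identity but does not reconcile it with the stated formula).
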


\begin{proof}
We prove separately each item.
\begin{enumerate}[$i)$]
  \item {By \eqref{eq:Poly2} we have 
  $$B(G;1,y) = \sum_{0 \leq i+j \leq n}B_{i,j}(G)1^{i}y^{j}=\sum_{j=0}^{n}\,\left(\sum_{i=0}^{n}B_{i,j}(G)\right)\, y^{j}.$$ 
  However, we have that $\sum_{i=0}^{n}B_{i,j}(G)=\binom{n}{j}$ since the inner sum above counts all subsets of vertices with exactly $j$ vertices. Therefore, we obtain $B(G;1,y) = \displaystyle\sum_{j=0}^{n}\,\binom{n}{j}\, y^{j}=(1+y)^n$. The particular cases follow from direct evaluation.}

  \medskip

  \item {By Theorems \ref{t:factors} and \ref{t:coef} ii) it is easy to obtain $B(G;0,y)=\displaystyle\prod_{i=1}^{k}\,(1+y^{n_i})$. Therefore, we have $B(G;0,1)=\displaystyle\prod_{i=1}^{k}\,(1+1^{n_i})=2^k$.}

  \medskip

  \item {Consider $S\subseteq V(G)$ with $S=\{u,v\}$ and $u\neq v$. Hence, we have 
  $$|B(S)|=d(u)+d(v)-|N(u)\cap N(v)|-2\cdot{\bf 1}_{u\sim v}$$
  Moreover, we have $[y^2]B(G;x,y)(x)=\displaystyle\sum_{S\subseteq V(G), |S|=2} x^{|B(S)|}$, and 
   \begin{align*}
    \displaystyle\left.\frac{\text{d}[y^2]B(G;x,y)}{\text{d}x}\right|_{x=1} & =\sum_{u,v\in V(G), u\neq v}\, \Big(d(u)+d(v)-|N(u)\cap N(v)|-2\cdot{\bf 1}_{u\sim v}\Big) \\
    & = \sum_{v\in V(G)}\left( (n-1)d(v) - \binom{d(v)}{2} \right)  - 2m \\
    & = 2m(n-2)- \sum_{v\in V(G)} \binom{d(v)}{2}.
   \end{align*}
  Finally, since $[y]B(G;x,y)=\displaystyle\sum_{v\in V(G)} x^{d(v)}$ we have $\left.\frac{\text{d}^2\big([y]B(G;x,y)\big)}{\text{d}x^2}\right|_{x=1}=\displaystyle\sum_{v\in V(G)} d(v)\big(d(v)-1\big)$ which completes the proof. 
  
  }
%
%
\end{enumerate}
\end{proof}

Here are two more interesting polynomials derived from the boundary polynomial 
\[
 B(G;x,1):=\displaystyle\sum_{S\subseteq V(G)} x^{|B(S)|}\qquad \text{ and } \qquad B(G;x,-1):=\displaystyle\sum_{S\subseteq V(G)} x^{|B(S)|}\cdot (-1)^{|S|}.
\]
We will study them.
The following result characterizes the graphs with isolated vertices by the occurrence of the factor $(y+1)$ in $B(G;x,y)$.

\begin{theorem}\label{t:isolate}
  $G$ has an isolated vertex if and only if $B(G;x,y)$ has a factor $(y+1)$. 
  
  Furthermore, The multiplicity of the factor $(y+1)$ in $B(G;x,y)$ is the number of isolated vertices in $G$.
\end{theorem}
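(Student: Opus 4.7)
The plan is to split the biconditional and the multiplicity claim, reducing the hard direction to a parity argument on a single coefficient. The forward direction (isolated vertex implies factor $(y+1)$) is immediate from Theorem \ref{t:factors}: every isolated vertex $v$ forms a component $\{v\}\simeq K_1$ for which the two subsets $\emptyset$ and $\{v\}$ yield $B(K_1;x,y)=1+y$ by direct computation, so if $G$ has $k$ isolated vertices then $(1+y)^k$ divides $B(G;x,y)$.

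To handle the converse and the exact multiplicity, it suffices to show that whenever $G$ has no isolated vertex, $(y+1)\nmid B(G;x,y)$. Since $(y+1)$ is irreducible in the UFD $\mathbb{Z}[x,y]$ and $B(G;x,y)=\prod_{i}B(G_i;x,y)$ by Theorem \ref{t:factors}, I would reduce to proving $B(G;x,-1)\neq 0$ for every connected graph $G$ of order $n\ge 2$. If $n$ is even, Theorem \ref{t:eval} part ii) does the job at once: $B(G;0,y)=1+y^n$ gives $B(G;0,-1)=2\neq 0$.

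The delicate case is $n$ odd, where $B(G;0,-1)=0$. My plan here is to inspect $[x^1]B(G;x,-1)$ directly. A subset $S$ with $|B(S)|=1$ is determined by the unique vertex $v$ with $B(S)=\{v\}$, and such an $S$ is forced to be a nonempty union of components of $G-v$. Connectedness of $G$ ensures that each component of $G-v$ contains a neighbor of $v$, so $v\in B(S)$ holds automatically. Writing the component orders of $G-v$ as $s_1^v,\ldots,s_{k_v}^v$, this yields
\[
 [x^1]B(G;x,-1)=\sum_{v\in V}\left(\prod_{i=1}^{k_v}\bigl(1+(-1)^{s_i^v}\bigr)-1\right).
\]
Each summand equals either $-1$ (if some $s_i^v$ is odd) or $2^{k_v}-1$ (if all $s_i^v$ are even), so each is an odd integer. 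A sum of $n$ odd integers with $n$ odd is odd, hence $[x^1]B(G;x,-1)\neq 0$.

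The main obstacle I anticipate is exactly this odd-order connected case: the natural evaluation at $x=0$ fails, and the parity trick on $[x^1]B(G;x,-1)$ is the key device. Once $(y+1)\nmid B(G';x,y)$ is established for the graph $G'$ obtained by deleting the $k$ isolated vertices of $G$, the factorization $B(G;x,y)=(1+y)^k B(G';x,y)$ from Theorem \ref{t:factors} forces the multiplicity of $(y+1)$ in $B(G;x,y)$ to equal exactly $k$.
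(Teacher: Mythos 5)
Your proof is correct, and while it hinges on the same key device as the paper --- showing that $[x]B(G;x,-1)\neq 0$ via a parity argument when there are no isolated vertices --- the route you take around it is genuinely different and, in places, cleaner. First, you reduce to connected graphs by combining Theorem \ref{t:factors} with the irreducibility of $(y+1)$ in the UFD $\mathbb{Z}[x,y]$; the paper instead argues directly on an arbitrary graph without isolated vertices. This reduction buys you two things: the even-order case is dispatched instantly by evaluating $B(G;0,-1)=1+(-1)^n=2$ via Theorem \ref{t:eval}~ii), and in the odd-order case connectedness guarantees that every nonempty union $S$ of components of $G-v$ satisfies $B(S)=\{v\}$, so the per-vertex contribution collapses to the single closed form $\prod_{i}\bigl(1+(-1)^{s_i^v}\bigr)-1$. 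This formula absorbs in one stroke the paper's separate treatment of non-cut vertices (the $|S|=n-1$ case), cut vertices with all components even, and cut vertices with some odd components, each of which the paper verifies is odd by an individual computation (including a complementation pairing for $n$ even). Your observation that each summand is manifestly odd, so a sum of an odd number $n$ of them cannot vanish, is exactly the paper's punchline. The multiplicity statement follows in both treatments by peeling off the $(1+y)$ factors coming from the $K_1$ components; your version, phrased through unique factorization, makes the ``exactly $k$'' claim slightly more transparent than the paper's iterative removal argument.
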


\begin{proof}
    On one hand, if $G$ has an isolated vertex, Theorem \ref{t:factors} gives that $B(G;x,y)$ has a factor $(y+1)$ since $G$ has a connected component isomorphic to the complete graph $K_1$.

    \medskip

    On the other hand, if $B(G;x,y)$ has a factor $(y+1)$, then we have $B(G;x,-1)=0$ for every $x\in\mathbb{R}$. Thus, each coefficient of $B(G;x,-1)$ is zero, in particular, the coefficient of $x$, \emph{i.e.}, $$[x]B(G;x,-1)=\displaystyle\sum_{S\subseteq V(G),|B(S)|=1} (-1)^{|S|}=0.$$ 
    Now we claim that \emph{if $[x]B(G;x,-1)=0$ then $G$ has an isolated vertex.} We prove the contrapositive, \emph{i.e.}, showing that \emph{if $G$ has no isolated vertices then $[x]B(G;x,-1)\neq0$.}

    So, assume that $G$ has no isolated vertices. Hence, every $S\subseteq V(G)$ with $|B(S)|=1$ satisfies 
    \begin{enumerate}[i)]
        \item $|S|=n-1$, or
        \item $v$ is a cut vertex where $B(S)=\{v\}$.
    \end{enumerate}
    Note that if $|S|=n-1$, then $|B(S)|=1$ since $G$ has no isolated vertex. Besides, if $B(S)=\{v\}$ and $|S|<n-1$, then $V(G)\setminus (S\cup\{v\})\neq\emptyset$; therefore, $v$ is a cut vertex of $G$ since $E\Big(S,V(G)\setminus (S\cup\{v\})\Big)=\emptyset$.  
    Denoted by $[x]B^{(v)}(G;x,-1)$, the corresponding sum within $[x]B(G;x,-1)$ associated to $B(S)=\{v\}$. Note that $$[x]B(G;x,-1)=\displaystyle\sum_{v\in V(G)} [x]B^{(v)}(G;x,-1).$$ 
    So, if $v$ is not a cut vertex, then $[x]B^{(v)}(G;x,-1)=(-1)^{n-1}$.
    Furthermore, if $G$ has no cut vertex, then 
    \begin{align}\label{eq:sum_v}
     [x]B(G;x,-1)=\displaystyle\sum_{v\in V(G)} (-1)^{n-1}=(-1)^{n-1}\,n\neq0.
    \end{align}
    Now consider if $G$ has a cut vertex. Consider $v$ a cut vertex of $G$ such that $G-\{v\}$ has $k>1$ connected components $G_1,G_2,\ldots,G_k$ with respective orders $n_1,n_2,\ldots,n_k$. Then, the number of $S\subseteq V(G)$ such that $B(S)=\{v\}$ is $2^k-1$ since those $S$ must be union of some of the vertex sets $V(G_1), V(G_2),\ldots, V(G_k)$. 
    
    If $n-1$ is odd, then $[x]B^{(v)}(G;x,-1)=(-1)^{n-1}$ since for every $S$ such that $B(S)=\{v\}$ we have that $S^\prime:=\overline{S}\setminus\{v\}$ satisfies that $B(S^\prime)=\{v\}$ except for $S=V(G)\setminus\{v\}$. Thus, we have that \eqref{eq:sum_v} also holds when $n-1$ is odd.
    
    Assume now that $n-1$ is even.  If $n_i$ is even for every $1\le i\le k$, then 
    $$[x]B^{(v)}(G;x,-1)=(2^k-1)(-1)^{n-1}=2^k-1\neq0.$$
    However, if out of $n_1,n_2,\ldots,n_k$ there are an even number of odd orders, let us consider $2r$ of them are odd and $k-2r$ are even. Without loss of generality we can assume that $n_1,n_2,\ldots,n_{2r}$ are odd and the others $k':=k-2r$ are even. If $k'=0$, the we have
    \[
      [x]B^{(v)}(G;x,-1)=\displaystyle\sum_{i=1}^{2r} \binom{2r}{i}\,(-1)^{i}=(1-1)^{2r}-1=-1.
    \]
    Otherwise, we can separately count those $S$ for which neither of $V(G_i)$ for $1\le i\le 2r$ are included in $S$ and when some of them are included, then we have
    \[
     [x]B^{(v)}(G;x,-1)= (2^{k'}-1) \,+\,2^{k'}\displaystyle\sum_{i=1}^{2r} \binom{2r}{i}\,(-1)^{i}=(2^{k'}-1) \,+ \,\cdot2^{k'}[(1-1)^{2r}-1]=-1.
    \]
    In fact, as a summary, for $n-1$ even, we have 
    \[
     [x]B^{(v)}(G;x,-1)=\left\{\begin{aligned}
         1 \qquad & \text{if } v \text{ is not a cut vertex}\\
         2^{k}-1 \qquad & \text{if } v \text{ is a cut vertex and all } k \text{ components are even} \\
         -1 \qquad & \text{if } v \text{ is a cut vertex and some components are odd}
     \end{aligned}\right.
    \]
    Thus, since $n$ is odd and $[x]B^v(G;x,-1)$ is also odd for every $v$, then we obtain that $[x]B(G;x,y)\neq0$.
    Therefore, proof for the if and only if part of the theorem is completed.

    Finally, by Theorem \ref{t:factors}, for each factor $(y+1)$ of $B(G;x,y)$, one at the time, we can remove an isolated vertex $u$ of $G$, and then we can apply to $G-{u}$ the previous result of this theorem. Thus, iterating this process as many times as isolated vertices of $G$ appear.
\end{proof}


\section{The Boundary Polynomial of Some Classes of Graphs}\label{sect3}

This section focuses on determining the boundary polynomial of some well-known families of graphs. In particular, we present explicit formulas for the boundary polynomial of complete, empty, path, cycle, wheel, and double star graphs, thereby expanding the number of classical graph classes with known boundary polynomials. 
Furthermore, we investigate the boundary polynomial of some graphs derived from others; \emph{i.e.}, the combination of two or more graphs as well as a few unary operations on a graph. 
As usual, we shall use $K_n, E_n,  P_n, C_n, W_n,$ and $S_n$ to denote, respectively, the complete, empty, path, cycle, wheel, and star graph of order $n$.  
In particular, note that $\overline{K}_n$ is isomorphic to the empty graph $E_n$.

\medskip

Theorem \ref{t:factors} allows us to obtain the boundary polynomial of $G\uplus \{v\}$ obtained by adding to $G$ a new isolated vertex $v$ \big(\emph{i.e.}, $v\notin V(G)$\big). This operation is called \emph{vertex addition}.

\begin{corollary}\label{c:G+v}
Let $v$ be a vertex such that $v\notin V(G)$. Then
$ B(G\uplus \{v\};x,y)= B(G;x,y)\; (y+1).$
\end{corollary}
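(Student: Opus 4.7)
The plan is to derive this as a direct application of Theorem \ref{t:factors}, since $G \uplus \{v\}$ is by construction a disconnected graph whose connected components are $G$ (or rather, its connected components) together with the singleton graph $\langle\{v\}\rangle \simeq K_1$.

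First I would observe that the isolated vertex $v$ forms its own connected component in $G \uplus \{v\}$, so the connected components of $G \uplus \{v\}$ are exactly the connected components of $G$ together with $K_1$. Applying the product formula \eqref{eq:factors} from Theorem \ref{t:factors} (treating $G$ itself as a product over its components, or applying the theorem directly to the two-piece decomposition $G \uplus \{v\} = G \sqcup K_1$), I would get
\[
  B(G \uplus \{v\}; x, y) = B(G; x, y) \cdot B(K_1; x, y).
\]

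It then remains to compute $B(K_1; x, y)$ by direct inspection from the definition \eqref{eq:Poly1}. The graph $K_1$ has exactly two subsets of vertices: the empty set, contributing $x^{0} y^{0} = 1$, and the full vertex set $\{v\}$, which has empty outer boundary and cardinality one, contributing $x^{0} y^{1} = y$. Hence $B(K_1; x, y) = y + 1$, and substituting into the previous identity yields the claim.

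There is no real obstacle here: the only subtlety is making sure Theorem \ref{t:factors} applies even though it was stated for $k > 1$ components, which is fine since $G \uplus \{v\}$ always has at least two components regardless of whether $G$ itself is connected (if $G$ already has several components, one just applies the theorem to the larger list $G_1, \ldots, G_k, K_1$).
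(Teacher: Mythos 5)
Your proposal is correct and matches the paper's intent exactly: the corollary is presented as a direct consequence of Theorem \ref{t:factors} applied to the decomposition of $G\uplus\{v\}$ into $G$ and the component $K_1$, with $B(K_1;x,y)=y+1$ computed from the definition. Your remark that the theorem's hypothesis $k>1$ is automatically satisfied since $G\uplus\{v\}$ always has at least two components is a sensible (if minor) point the paper leaves implicit.
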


Our next result is the consequence of Corollary \ref{c:G+v} and direct calculation.

\begin{proposition}\label{p:En+Kn}
   For every $n\ge1$ we have
   \begin{equation}\label{eq:En}
       B(E_n;x,y)=(1+y)^n
   \end{equation}
   \begin{equation}\label{eq:Kn}
       B(K_{n};x,y) = (x+y)^{n} + 1 - x^{n}
   \end{equation}
\end{proposition}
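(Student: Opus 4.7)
The plan is to handle the two equations separately, as they admit very different but equally short arguments.

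For \eqref{eq:En}, the cleanest route is to iterate Corollary \ref{c:G+v}. I would observe that $E_1$ is a single isolated vertex, so its only subsets are $\emptyset$ and $V(E_1)$, both of which have empty outer boundary, giving $B(E_1;x,y)=1+y$. Since $E_n$ is obtained from $E_{n-1}$ by adding an isolated vertex, a straightforward induction via Corollary \ref{c:G+v} yields $B(E_n;x,y)=(1+y)^n$. (Alternatively, one can argue directly: $E_n$ has no edges, so $B(S)=\emptyset$ for every $S\subseteq V(E_n)$, and grouping subsets by cardinality gives $\sum_{j=0}^n\binom{n}{j}y^j=(1+y)^n$ by the binomial theorem.)

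For \eqref{eq:Kn}, the plan is to compute $|B(S)|$ explicitly for every $S\subseteq V(K_n)$ and then sum. Since every two distinct vertices of $K_n$ are adjacent, any vertex of $\overline{S}$ is a neighbor of every vertex of $S$; hence, provided $S\neq\emptyset$, we have $B(S)=\overline{S}$ and $|B(S)|=n-|S|$. The two degenerate cases are $S=\emptyset$ (contributing $1$) and $S=V(K_n)$ (for which $B(S)=\emptyset$, contributing $y^n$). Grouping the remaining subsets by cardinality $j$ with $1\le j\le n-1$, there are $\binom{n}{j}$ such $S$, each contributing $x^{n-j}y^j$, which gives
\[
B(K_n;x,y)=1+y^n+\sum_{j=1}^{n-1}\binom{n}{j}x^{n-j}y^j.
\]
The final step is to recognize that the right-hand side of \eqref{eq:Kn} equals this expression. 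Expanding by the binomial theorem,
\[
(x+y)^n+1-x^n=\sum_{j=0}^{n}\binom{n}{j}x^{n-j}y^j+1-x^n,
\]
and separating the $j=0$ term (which is $x^n$) and the $j=n$ term (which is $y^n$) leaves exactly $1+y^n+\sum_{j=1}^{n-1}\binom{n}{j}x^{n-j}y^j$, matching the previous display.

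There is no real obstacle: the only care needed is in isolating the two extremal subsets $S=\emptyset$ and $S=V(K_n)$ so that the sum can be completed to a full binomial expansion, which is precisely the reason for the corrective ``$+1-x^n$'' term in \eqref{eq:Kn}.
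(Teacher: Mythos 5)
Your proposal is correct and follows essentially the same route as the paper: equation \eqref{eq:En} by iterating Corollary \ref{c:G+v} from the base case $B(E_1;x,y)=1+y$, and equation \eqref{eq:Kn} by observing that $B(S)=\overline{S}$ for every nonempty $S\subseteq V(K_n)$, grouping subsets by cardinality, and matching the resulting sum to the binomial expansion of $(x+y)^n+1-x^n$. The only cosmetic difference is that you split off $S=V(K_n)$ as a separate case, whereas the paper keeps $j=n$ inside the sum; the computation is identical.
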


\begin{proof}
Note that \eqref{eq:En} is a direct consequence of Corollary \ref{c:G+v}. Since we have 
\[
 B(E_{n+1};x,y)= (y+1)\cdot B(E_n;x,y) \quad \text{ and }\quad B(E_1;x,y)= y+1.
\]

\smallskip

For \eqref{eq:Kn}, consider $S\subseteq V(K_n)$. 
Then, for every $1\le j\le n$ we have that if $|S|=j$, then $|B(S)|=|\overline{S}|=n-j$. Thus, we have
\[
  B(K_n;x,y)=B_{0,0}(K_n)+\displaystyle\sum_{j=1}^{n} B_{n-j,j}(K_n)\,x^{n-j}\,y^{j}=1+\displaystyle\sum_{j=1}^{n} \binom{n}{j} \, x^{n-j}\, y^{j} = (x+y)^n +1-x^n.
\]
\end{proof}

Let $H, H_1$ and $H_2$ be graphs, and let $*$ be a standard graph operation such that $H=H_1*H_2$.  A classical approach used when investigating some graph invariants is to establish relationships between $H_1,H_2$ and the resulting graph $H$ with respect to such an invariant. Now we shall use this strategy in order to obtain the boundary polynomial of several graphs. 
Let $G_1$ and $G_2$ be two disjoint graphs. We recall that the \emph{graph join} $G_1 + G_2$ of $G_1$ and $G_2$ is the graph that results by  adding to $G_1\uplus G_2$ all the edges of the form $uv$ with $u\in V(G_1)$ and $v\in V(G_2)$.
Note that the join is a commutative operation where $V(G_1+G_2)=V(G_1)\cup V(G_2)$.

\begin{theorem}\label{t:join}
  Let $G_1,G_2$ be two graphs of order $n_1$ and $n_2$, respectively. Then
  \[
  B(G_{1}+G_{2};x,y) = B(K_{n_1+n_2};x,y) \,+\, x^{n_{2}} \big(B(G_{1};x,y)-B(K_{n_1};x,y)\big) \,+\, x^{n_{1}} \big(B(G_{2};x,y)-B(K_{n_2};x,y)\big).
  \]
\end{theorem}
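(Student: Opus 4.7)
The plan is to count by partitioning each subset $S\subseteq V(G_1+G_2)$ according to its intersections $S_i:=S\cap V(G_i)$ for $i=1,2$, and evaluating the outer boundary $B(S)$ in each case using the key observation that every vertex of $G_1$ is adjacent to every vertex of $G_2$ in the join.

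First I would split into four cases. If $S_1=S_2=\emptyset$, the contribution is $1$. If both $S_1$ and $S_2$ are nonempty, then every vertex in $V(G_1+G_2)\setminus S$ has a neighbor in the opposite side, so $B(S)=\overline{S}$ and $|B(S)|=n_1+n_2-|S_1|-|S_2|$. If only $S_1\neq\emptyset$, then $V(G_2)\subseteq B(S)$ (since every vertex of $V(G_2)$ is joined to any chosen vertex of $S_1$), and within $V(G_1)$ the boundary coincides with $B_{G_1}(S_1)$; hence $|B(S)|=n_2+|B_{G_1}(S_1)|$. The case $S_2\neq\emptyset$, $S_1=\emptyset$ is symmetric.

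Second, I would write $B(G_1+G_2;x,y)$ as the sum of these four contributions:
\[
B(G_1+G_2;x,y)=1+T+x^{n_2}\!\!\!\sum_{\emptyset\neq S_1\subseteq V(G_1)}\!\!\!x^{|B_{G_1}(S_1)|}y^{|S_1|}+x^{n_1}\!\!\!\sum_{\emptyset\neq S_2\subseteq V(G_2)}\!\!\!x^{|B_{G_2}(S_2)|}y^{|S_2|},
\]
where $T$ denotes the double sum over both $S_1,S_2$ nonempty of $x^{n_1+n_2-|S_1|-|S_2|}y^{|S_1|+|S_2|}$. The last two sums can be recognized as $x^{n_2}\bigl(B(G_1;x,y)-1\bigr)$ and $x^{n_1}\bigl(B(G_2;x,y)-1\bigr)$, respectively.

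Third, I would evaluate $T$ via inclusion–exclusion against the full binomial expansion. Using $V(G_1+G_2)=V(G_1)\cup V(G_2)$ disjointly, the binomial theorem gives
\[
(x+y)^{n_1+n_2}=\sum_{S_1\subseteq V(G_1)}\sum_{S_2\subseteq V(G_2)} x^{n_1+n_2-|S_1|-|S_2|}y^{|S_1|+|S_2|},
\]
so isolating the $S_1,S_2$ both nonempty part yields
\[
T=(x+y)^{n_1+n_2}-x^{n_1+n_2}-x^{n_2}\!\!\!\sum_{\emptyset\neq S_1}\!\!x^{n_1-|S_1|}y^{|S_1|}-x^{n_1}\!\!\!\sum_{\emptyset\neq S_2}\!\!x^{n_2-|S_2|}y^{|S_2|}.
\]
By Proposition \ref{p:En+Kn}, the two remaining sums equal $B(K_{n_1};x,y)-1$ and $B(K_{n_2};x,y)-1$, and $(x+y)^{n_1+n_2}-x^{n_1+n_2}=B(K_{n_1+n_2};x,y)-1$.

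Finally, I would substitute these identifications back and simplify; the $1$'s and $-1$'s cancel cleanly, producing exactly
\[
B(K_{n_1+n_2};x,y)+x^{n_2}\bigl(B(G_1;x,y)-B(K_{n_1};x,y)\bigr)+x^{n_1}\bigl(B(G_2;x,y)-B(K_{n_2};x,y)\bigr).
\]
The only subtle step is the correct identification of $|B(S)|$ when exactly one of $S_1,S_2$ is empty (ensuring one remembers to include all of $V(G_2)$ or $V(G_1)$ as part of the boundary); once that is settled, the remainder is bookkeeping and a single application of the binomial theorem.
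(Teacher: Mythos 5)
Your proposal is correct and follows essentially the same route as the paper's proof: the same four-way split of $S$ by its intersections with $V(G_1)$ and $V(G_2)$, the same identifications $B(S)=B_{G_1}(S_1)\cup V(G_2)$ (one side empty) and $B(S)=\overline{S}$ (both nonempty), and the same reduction of the both-nonempty sum to $B(K_{n_1+n_2};x,y)-1-x^{n_2}\bigl(B(K_{n_1};x,y)-1\bigr)-x^{n_1}\bigl(B(K_{n_2};x,y)-1\bigr)$ via the binomial expansion of $(x+y)^{n_1+n_2}$. The only cosmetic difference is that you isolate the term $T$ by inclusion--exclusion on the product form, while the paper subtracts the one-sided sums directly; the bookkeeping is identical.
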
  

\begin{proof}
 Let $S \subseteq V(G_{1}+G_{2})$. 
 
 If $S=\emptyset$, then $B(S)=\emptyset$ and we get the addend $x^0\,y^0=1$. Assume now that $S\neq\emptyset$.

 If $S\subseteq V(G_1)$, then $B_{G_1+G_2}(S)=B_{G_1}(S)\cup V(G_2)$. It yields the expression 
 \[
  \sum_{\emptyset \not= S \subseteq V(G_{1})} x^{|B_{G_{1}}(S)|+n_2}y^{|S|} = x^{n_{2}}\sum_{\emptyset \not= S\subseteq V(G_{1})} x^{|B_{G_{1}}(S)|}y^{|S|} = x^{n_{2}}\big(B(G_{1};x,y)-1\big).
 \]

By symmetry, if $S\subseteq V(G_2)$, we obtain 
 \[ 
   x^{n_{1}}\big(B(G_{2};x,y)-1\big).
 \]

Finally, if $S_1:=S\cap V(G_1)\neq\emptyset$ and $S_2:=S\cap V(G_2)\neq\emptyset$, then $B_{G_1+G_2}(S)=V(G_1+G_2)\setminus S$. Thus, it follows  
 \begin{align*}
  \quad & \sum_{\emptyset \not= S \subseteq V(G_{1}+G_{2})} x^{|B_{G_{1}+G_{2}}(S)|}y^{|S|}\,-\sum_{\emptyset\neq S\subseteq V(G_1)}\, x^{|B_{G_{1}+G_{2}}(S)|}y^{|S|}\,-\, \sum_{\emptyset\neq S\subseteq V(G_2)}\,x^{|B_{G_{1}+G_{2}}(S)|}y^{|S|} \\
  = & \sum_{j=1}^{n_1+n_2}\binom{n_1+n_2}{j} x^{n_1+n_2-j} y^{j} \,-\, x^{n_2} \sum_{j=1}^{n_1} \binom{n_1}{j} x^{n_1-j} y^{j}  \,-\, x^{n_1} \sum_{j=1}^{n_2} \binom{n_2}{j} x^{n_2-j} y^{j} \\
  = & \big(B(K_{n_1+n_2};x,y)-1\big) \,-\, x^{n_{2}} \big(B(K_{n_1};x,y)-1\big) \,-\, x^{n_{1}} \big(B(K_{n_2};x,y)-1\big).
\end{align*}
\end{proof}

\medskip

\begin{corollary}\label{c:E+G}
 Let $G$ be a graph with order $n\ge1$. Then, we have 
 \[
 B(E_1+G;x,y) = B(K_{n+1};x,y) \,+\, x \Big(B(G;x,y)-B(K_{n};x,y)\Big).
 \]
\end{corollary}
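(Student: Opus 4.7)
The plan is to obtain this as a direct specialization of Theorem \ref{t:join}. First I would set $G_1 := E_1$ (so $n_1 = 1$) and $G_2 := G$ (so $n_2 = n$) and substitute into the formula from Theorem \ref{t:join}. This yields
\[
 B(E_1+G;x,y) = B(K_{n+1};x,y) \,+\, x^{n}\bigl(B(E_1;x,y)-B(K_1;x,y)\bigr) \,+\, x\bigl(B(G;x,y)-B(K_n;x,y)\bigr).
\]

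The key observation is that $E_1$ and $K_1$ are the same single-vertex graph, so $E_1 \simeq K_1$, and hence $B(E_1;x,y) = B(K_1;x,y)$. This can also be checked directly using Proposition \ref{p:En+Kn}: formula \eqref{eq:En} gives $B(E_1;x,y) = 1+y$, while \eqref{eq:Kn} with $n=1$ gives $B(K_1;x,y) = (x+y) + 1 - x = 1+y$. Consequently, the middle term $x^n\bigl(B(E_1;x,y)-B(K_1;x,y)\bigr)$ vanishes, and the formula reduces to the desired identity.

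There is no real obstacle here; the only thing to verify is that the single-vertex graphs $E_1$ and $K_1$ coincide (already noted at the beginning of this section: $\overline{K}_n$ is isomorphic to $E_n$, in particular $E_1 = K_1$), which trivially makes their boundary polynomials equal and cancels the $x^n$-term in the join formula.
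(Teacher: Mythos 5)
Your proof is correct and follows the same route the paper intends: the corollary is stated immediately after Theorem \ref{t:join} precisely as the specialization $G_1=E_1$, $G_2=G$, with the middle term vanishing because $E_1\simeq K_1$ and hence $B(E_1;x,y)=B(K_1;x,y)=1+y$. Nothing is missing.
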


Our next result is a direct consequence of Theorem \ref{t:join} and the fact that $W_{n}\simeq E_1 + C_{n-1}$, $K_{n,m}\simeq E_n + E_m$, $S_{n}\simeq K_1 + E_{n-1}$ and $K_{n}-e\simeq E_2 + K_{n-2}$.

\begin{corollary}\label{p:join_samples} 
 {$\quad$}
 \begin{enumerate}[(i)]
   \item For every $n\ge4$ 
       \begin{equation}\label{eq:Wn}
         B(W_{n};x,y) = x\,B(C_{n-1};x,y)-x + y(x+y)^{n-1} + 1.
       \end{equation}

   \item For every $n,m\ge1$ 
       \begin{equation}\label{eq:k_nm}
         B(K_{n,m};x,y) = (x+y)^{n+m} + x^{n}\big((1+y)^{m}-(x+y)^{m}\big) + x^{m}\big((1+y)^{n}-(x+y)^{n}\big) + \big(x^n-1\big)\big(x^m-1\big)
       \end{equation}

   \item For every $n\ge2$
       \begin{equation}\label{eq:Sn}
         B(S_n;x,y) = (x+y)^{n} + 1 + x\left((1+y)^{n-1}-(x+y)^{n-1}-1\right)
       \end{equation}

   \item For every $n\ge3$
       \begin{equation}\label{eq:kn-e}
         B(K_n-e;x,y) = B(K_{n};x,y) + 2x^{n-2}y(1-x).
       \end{equation}
  \end{enumerate}
\end{corollary}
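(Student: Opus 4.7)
The plan is to verify each of the four formulas by identifying a suitable decomposition of the graph as a join and then invoking Theorem \ref{t:join} (or its specialization Corollary \ref{c:E+G}) together with the known boundary polynomials of $E_n$ and $K_n$ from Proposition \ref{p:En+Kn}. Each case reduces to algebraic bookkeeping rather than any new combinatorial argument, so the heart of the work is careful simplification.

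For part $(i)$, I will use $W_n \simeq E_1 + C_{n-1}$ and apply Corollary \ref{c:E+G} with $G=C_{n-1}$, writing $B(K_n;x,y)=(x+y)^n+1-x^n$ and $B(K_{n-1};x,y)=(x+y)^{n-1}+1-x^{n-1}$, then collecting the terms $(x+y)^n - x(x+y)^{n-1}=y(x+y)^{n-1}$ to recover \eqref{eq:Wn}. For part $(iii)$, the same Corollary \ref{c:E+G} applies to $S_n \simeq K_1 + E_{n-1}$ with $G=E_{n-1}$, and since $B(E_{n-1};x,y)=(1+y)^{n-1}$, the cancellation of the $\pm x^n$ terms produces exactly \eqref{eq:Sn}.

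For part $(ii)$, I will use $K_{n,m}\simeq E_n+E_m$ in Theorem \ref{t:join}, substituting $B(E_n;x,y)=(1+y)^n$ and $B(E_m;x,y)=(1+y)^m$, and then grouping the left-over terms $-x^{n+m}+x^{n+m}+x^{n+m}-x^n-x^m+1$ into the factored form $(x^n-1)(x^m-1)$ to match \eqref{eq:k_nm}. For part $(iv)$, the decomposition $K_n-e\simeq E_2+K_{n-2}$ makes the second correction term in Theorem \ref{t:join} vanish because $B(K_{n-2};x,y)-B(K_{n_2};x,y)=0$; what remains is $x^{n-2}\bigl(B(E_2;x,y)-B(K_2;x,y)\bigr)$, and expanding $(1+y)^2-((x+y)^2+1-x^2)=2y(1-x)$ yields \eqref{eq:kn-e}.

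There is no genuine obstacle here beyond bookkeeping: the only place one has to be slightly watchful is in part $(ii)$, where the three leftover degree-$(n+m)$ terms in $x$ have to be collected correctly with the mixed terms so that the remainder factors as $(x^n-1)(x^m-1)$ rather than being left in an expanded form. Accordingly, I would present the four computations as four short displayed equations, each labeled by the relevant isomorphism, and merge them into a single paragraph proof rather than a case-by-case argument.
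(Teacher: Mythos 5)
Your proposal is correct and follows exactly the route the paper intends: the paper states the corollary as a direct consequence of Theorem \ref{t:join} (and Corollary \ref{c:E+G}) via the isomorphisms $W_n\simeq E_1+C_{n-1}$, $K_{n,m}\simeq E_n+E_m$, $S_n\simeq K_1+E_{n-1}$, $K_n-e\simeq E_2+K_{n-2}$, and gives no further argument, so your worked-out algebra (which checks out in all four cases) simply fills in the omitted bookkeeping.
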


These are also well-known results that can also be obtained from Theorems \ref{t:coef} and \ref{t:join}. 

\begin{corollary}\label{c:join_properties}
  Let $G_1,G_2$ be two graphs of order $n_1$ and $n_2$, respectively. Then
  \begin{equation}\label{eq:join_properties}
   \begin{aligned}
       \p(G_1+G_2)&=\displaystyle\left\{\begin{array}{cc} n_1+n_2-2 & \quad \text{ if } \p(G_i)=n_i-2 \text{ for some } i=1,2; \\ n_1+n_2-4 & \quad \text{ otherwise} \end{array}\right. \\
       \g_R(G_1+G_2)&=\displaystyle\left\{\begin{array}{cc} 2 & \qquad\qquad\qquad \text{ if } \g_R(G_i)\le2 \text{ for some } i=1,2; \\ 4 & \qquad\qquad\qquad \text{ otherwise} \end{array}\right. \\
       \g(G_1+G_2)&=\displaystyle\left\{\begin{array}{cc} 1 & \qquad\qquad\qquad \text{ if } \g(G_i)=1 \text{ for some } i=1,2; \\ 2 & \qquad\qquad\qquad \text{ otherwise} \end{array}\right. \\
       k_v(G_1+G_2)&=\displaystyle\min\{k_v(G_1)+n_2,k_v(G_2)+n_1\}
   \end{aligned}
  \end{equation}
\end{corollary}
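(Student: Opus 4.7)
The plan is to treat the four items separately. For $\gamma$ and $k_v$ I would read the parameter off $B(G_1+G_2;x,y)$ via Theorem \ref{t:coef} items $(ix)$ and $(x)$, feeding in the decomposition supplied by Theorem \ref{t:join}; for $\partial$ and $\gamma_R$ I would use the identities in \eqref{eq:Poly_deg} and control the degrees of $B(G_1+G_2;x,x^{-1})$ and $B(G_1+G_2;x,x)$. Note that $G_1+G_2$ is always connected, so item $(x)$ of Theorem \ref{t:coef} is applicable.

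First I would dispose of $\gamma$: by Theorem \ref{t:coef}$(ix)$, $\gamma(G_1+G_2)=1$ iff $[x^{n_1+n_2-1}y]B(G_1+G_2;x,y)\neq 0$, which by Theorem \ref{t:coef}$(iv)$ says $G_1+G_2$ has a universal vertex. Since a vertex $v\in V(G_i)$ is universal in $G_1+G_2$ iff it is universal in $G_i$, this reduces to $\gamma(G_i)=1$ for some $i$; otherwise any pair $\{v_1,v_2\}$ with $v_i\in V(G_i)$ dominates via join edges, giving $\gamma(G_1+G_2)=2$. For $k_v$, the structural observation is that any separating set $S$ of $G_1+G_2$ must contain $V(G_i)$ in full for some $i$; otherwise a surviving pair $v_1\in V(G_1)\setminus S$, $v_2\in V(G_2)\setminus S$ together with the join edges reconnects all remaining vertices. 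Hence $|S|\ge n_j+k_v(G_i)$ for $\{i,j\}=\{1,2\}$, and both lower bounds are attained, giving the min formula. The same conclusion can be read from Theorem \ref{t:coef}$(x)$ applied to the three summands in Theorem \ref{t:join}, because the minimum derivative order at $(x,y)=(0,1)$ for each summand is exactly $n_j+k_v(G_i)$ (or $n_1+n_2-1$ for the complete-graph term).

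For $\partial$ and $\gamma_R$, substituting $y=x^{-1}$ in Theorem \ref{t:join} produces
\begin{equation*}
 B(G_1+G_2;x,x^{-1}) = B(K_{n_1+n_2};x,x^{-1}) + x^{n_2}\bigl(B(G_1;x,x^{-1})-B(K_{n_1};x,x^{-1})\bigr) + x^{n_1}\bigl(B(G_2;x,x^{-1})-B(K_{n_2};x,x^{-1})\bigr),
\end{equation*}
and an analogous identity at $y=x$. The three pieces have degrees $n_1+n_2-2$, at most $n_2+\partial(G_1)$, and at most $n_1+\partial(G_2)$. Using the bound $\partial(G)\le n-2$ and the identity $\partial(G)=n-\gamma_R(G)$ from \cite{BFS}, the overall maximum degree is $n_1+n_2-2$ precisely when $\partial(G_i)=n_i-2$ for some $i$, and drops to $n_1+n_2-4$ otherwise; the same case split fed into \eqref{eq:Poly_deg} yields the $\gamma_R\in\{2,4\}$ dichotomy.

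The main obstacle I anticipate is controlling cancellations of leading monomials among the three summands: the subtraction $B(G_i;x,x^{-1})-B(K_{n_i};x,x^{-1})$ can either kill or preserve the top-degree term depending on how closely $G_i$ mimics $K_{n_i}$ near the $\partial$-maximizing $S$, and one must verify that the net degree jump really occurs at the threshold $\partial(G_i)=n_i-2$ rather than at some nearby value. A careful case analysis driven by the explicit formula \eqref{eq:Kn} for $B(K_n;x,y)$, combined with the bound $\partial(G)\le n-2$ and the universal-vertex characterization of equality, should resolve the cancellations and produce the claimed dichotomies.
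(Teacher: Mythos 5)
Your arguments for the third and fourth identities are sound: the observations that $\gamma(G_1+G_2)=1$ exactly when some $G_i$ has a universal vertex, and that every vertex cut of $G_1+G_2$ must contain $V(G_1)$ or $V(G_2)$ in full, are the right structural facts, and they are consistent with the paper, which offers no written proof beyond the remark that the corollary "can be obtained from Theorems \ref{t:coef} and \ref{t:join}."

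The genuine gap is in your treatment of $\partial$ and $\gamma_R$. After substituting $y=x^{-1}$, the three summands of Theorem \ref{t:join} are generating functions over \emph{disjoint} families of subsets with nonnegative coefficients, so there is no cancellation to control; the degree of the whole is simply the maximum of the degrees of the pieces, namely
\[
  \partial(G_1+G_2)=\max\bigl\{\,n_1+n_2-4,\;\, n_2+\partial^{*}(G_1),\;\, n_1+\partial^{*}(G_2)\,\bigr\},
  \qquad \partial^{*}(G_i):=\max_{\emptyset\neq S\subseteq V(G_i)}\big(|B(S)|-|S|\big).
\]
Your step "the overall maximum degree is $n_1+n_2-2$ precisely when $\partial(G_i)=n_i-2$ for some $i$, and drops to $n_1+n_2-4$ otherwise" is where the argument fails: the middle term $n_2+\partial^{*}(G_1)$ can land strictly between those two values. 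Concretely, for $G_1=G_2=C_4$ one has $\partial^{*}(C_4)=1=n_i-3$, so a single vertex $v$ has degree $6$ in $C_4+C_4$ and gives $|B(\{v\})|-|\{v\}|=5$; hence $\partial(C_4+C_4)=\max\{4,5,5\}=5=n_1+n_2-3$, and correspondingly $\gamma_R(C_4+C_4)=3$ (weight $2$ on a vertex and $1$ on its unique non-neighbor). So the first two formulas are not merely unproven by your bookkeeping --- they are false as stated, and no analysis of leading-term cancellations (the obstacle you flagged) can rescue the two-valued dichotomy. The correct statements, which your framework via \eqref{eq:Poly_deg} and Theorem \ref{t:join} does prove once the erroneous case split is replaced, are the displayed maximum above and $\gamma_R(G_1+G_2)=\min\bigl\{4,\; n_1-\partial^{*}(G_1),\; n_2-\partial^{*}(G_2)\bigr\}$.
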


For $v\in V(G)$ and $i\in\{0,1\}$ let us define the following
\
\begin{equation}\label{eq:def1}
\begin{aligned}
 B(G;x,y)^{i}_v & :=\displaystyle \sum_{S\subseteq V,
{\bf 1}_S(v)=i} \, x^{|B(S)|}y^{|S|},
\\
B(G;x,y)^{i}_{v^{0}}& :=\displaystyle \sum_{S\subseteq
V,{\bf 1}_S(v)=i, N(v)\cap S=\emptyset} \, x^{|B(S)|}y^{|S|},
\\
B(G;x,y)^{i}_{v^{1}} & :=\displaystyle \sum_{S\subseteq
V,{\bf 1}_S(v)=i,N(v)\cap S \neq\emptyset} \, x^{|B(S)|}y^{|S|}.
\end{aligned}
\end{equation}

\begin{remark}\label{remark:frank} 
  The following equalities follow from previous definitions,
\begin{eqnarray*}
B(G;x,y)& = & B(G;x,y)^{0}_{v}+B(G;x,y)^{1}_{v},\\
B(G;x,y)^{0}_{v}& = & B(G;x,y)^{0}_{v^{0}}+B(G;x,y)^{0}_{v^{1}}\\
B(G;x,y)^{1}_{v}& = & B(G;x,y)^{1}_{v^{0}}+B(G;x,y)^{1}_{v^{1}}.
\end{eqnarray*}
\end{remark}
\medskip

Let $G$ be a graph. Consider a vertex $v\in V(G)$ and a vertex $u\notin V(G)$. Let $G_e$ be the graph obtained by adding to $G$ the vertex $u$ and the edge $e=uv$ such that $v$ is a pendant vertex of $G_e$. Our next result provides a relation between $B(G;x,y)$ and $B(G_{e};x,y)$.

\begin{lemma}\label{l:pendent+}
 Let $e=uv$, $G$ and $G_{e}$ be as above. Then
 \[
 \begin{bmatrix}
     B(G_{e};x,y)^{0}_{u^{0}} \\ B(G_{e};x,y)^{0}_{u^{1}} \\ B(G_{e};x,y)^{1}_{u^{0}} \\ B(G_{e};x,y)^{1}_{u^{1}}
 \end{bmatrix}
 =
 \begin{bmatrix}
     1 & 1 & 0 & 0 \\ 0 & 0 & x & x \\ xy & y & 0 & 0 \\ 0 & 0 & y & y
 \end{bmatrix}
 \begin{bmatrix}
     B(G;x,y)^{0}_{v^{0}} \\ B(G;x,y)^{0}_{v^{1}} \\ B(G;x,y)^{1}_{v^{0}} \\ B(G;x,y)^{1}_{v^{1}}
 \end{bmatrix}.
 \]
\end{lemma}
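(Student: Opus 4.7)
The plan is to fix an arbitrary subset $T \subseteq V(G_e)$, write $S := T \cap V(G)$, and classify $T$ by the two bits ${\bf 1}_T(u)$ and ${\bf 1}_T(v)$. Since $N_{G_e}(u)=\{v\}$, the condition $N(u)\cap T\neq\emptyset$ is equivalent to $v\in T$, i.e., $v\in S$; therefore the four cases obtained from these two bits correspond precisely to the four restricted polynomials $B(G_e;x,y)^{i}_{u^{j}}$ appearing on the left-hand side of the asserted matrix identity.

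The key observation to isolate is that, because $u$ is pendant in $G_e$ with unique neighbor $v$, the edge set of $G_e$ differs from that of $G$ only by the single edge $uv$. Consequently, for every vertex $w\in V(G)\setminus\{v\}$ one has $w\in B_{G_e}(T)$ if and only if $w\in B_G(S)$. Thus the only possible discrepancies between $B_{G_e}(T)$ and $B_G(S)$ concern the two vertices $u$ and $v$, and the task reduces to tracking their status in each case.

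I would then walk through the four cases. When $u\notin T$ and $v\notin S$, neither $u$ nor $v$ contributes to the boundary, so $B_{G_e}(T)=B_G(S)$ and $|T|=|S|$; summing over such $T$ and then splitting by whether $N(v)\cap S=\emptyset$ reproduces the first row $[1,1,0,0]$. When $u\notin T$ and $v\in S$, the vertex $u$ is forced into the boundary via the edge $uv$, so $|B_{G_e}(T)|=|B_G(S)|+1$ and $|T|=|S|$, giving the second row $[0,0,x,x]$. When $u\in T$ and $v\in S$, neither endpoint of $uv$ lies in $B_{G_e}(T)$, so $B_{G_e}(T)=B_G(S)$ with $|T|=|S|+1$, giving the fourth row $[0,0,y,y]$.

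The one case that needs extra care is $u\in T$ with $v\notin S$. Here $v$ is forced into $B_{G_e}(T)$ because it is adjacent to $u\in T$, but whether this actually adds a new element to $B_G(S)$ depends on whether $v$ already belonged to $B_G(S)$, equivalently on whether $N(v)\cap S\neq\emptyset$. Splitting this case into its $v^0$ and $v^1$ subcases therefore produces the extra factor $x$ in the $v^0$ subcase but not in the $v^1$ subcase, yielding the third row $[xy,y,0,0]$. This asymmetry between $v^0$ and $v^1$ is the main (and essentially only) obstacle in the proof; once it is settled, the four identities assemble directly into the claimed $4\times 4$ matrix equation.
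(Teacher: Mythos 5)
Your proof is correct and follows essentially the same route as the paper's: the same partition of subsets of $V(G_e)$ by the membership bits of $u$ and $v$, the same observation that only $u$ and $v$ can change boundary status, and the same $v^0$/$v^1$ split in the $u\in T$, $v\notin T$ case to produce the row $[xy,y,0,0]$. No gaps.
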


\begin{proof}
Let $G_{e}$ be of order $n$, which implies that $G$ is of order $n-1$. We consider that $N_{G_{e}}(u) = \{v\}$, and that $N_{G}(v) = N_{G_{e}}(v)\setminus \{u\}$. Let $S^\prime := S\setminus \{u\} \subseteq V(G)$ for $S \subseteq V(G_{e})$. For $i \in \{0,1\}$ we define
$$\mathcal{V}_{u,v}^{i,j} := \{S \subseteq V(G_{e}): {\bf 1}_{S}(u) = i, {\bf 1}_{S}(v) = j\}.$$
We can see that $\left\{\mathcal{V}_{u,v}^{0,0}, \mathcal{V}_{u,v}^{0,1},\mathcal{V}_{u,v}^{1,0}, \mathcal{V}_{u,v}^{1,1}\right\}$ is a partition of the power set of $V(G_{e})$. Now, let $S \in \mathcal{V}_{u,v}^{0,0}$. Then $S = S^\prime$ and $B_{G_{e}}(S) = B_{G}(S^\prime)$. Because of the bijection between $\mathcal{V}_{u,v}^{0,0}$ and the collection $\{S^\prime \subseteq V(G): {\bf 1}_{S^\prime}(v) = 0\}$, then we have 
\begin{equation}\label{eq:BGE1}
    B(G_{e};x,y)_{u^{0}}^{0} = \sum_{S \in \mathcal{V}_{u,v}^{0,0}}x^{|B_{G_{e}}(S)|}\,y^{|S|} = \sum_{S^\prime \subseteq V(G),{\bf 1}_{S^\prime}(v) = 0}x^{|B_{G}(S^\prime)|}\,y^{|S^\prime|} = B(G;x,y)_{v}^{0}.
\end{equation}
Now suppose that $S \in \mathcal{V}_{u,v}^{0,1}$, for which we have $S^\prime = S$ and $B_{G_{e}}(S) = B_{G}(S^\prime)\cup \{u\}$. 
Because of the bijection between $\mathcal{V}_{u,v}^{0,1}$ and the collection $\{S^\prime \subseteq V(G): {\bf 1}_{S^\prime}(v) = 1\}$, then we have 
\begin{equation}\label{eq:BGE2}
    B(G_{e};x,y)_{u^{1}}^{0} = \sum_{S \in \mathcal{V}_{u,v}^{0,1}}x^{|B_{G_{e}}(S)|}\,y^{|S|} = \sum_{S^\prime \subseteq V(G),{\bf 1}_{S^\prime}(v) = 1}x^{|B_{G}(S^\prime)| + 1}\,y^{|S^\prime|} = xB(G;x,y)_{v}^{1}.
\end{equation}
Let $S \in \mathcal{V}_{u,v}^{1,1}$. Then $|S| = |S^\prime| + 1$ and $B_{G_{e}}(S) = B_{G}(S^\prime)$. 
From the bijection between $\mathcal{V}_{u,v}^{1,1}$ and the collection $\{S^\prime \subseteq V(G): {\bf 1}_{S^\prime}(v) = 1\}$, we obtain,
\begin{equation}\label{eq:BGE3}
    B(G_{e};x,y)_{u^{1}}^{1} = \sum_{S \in \mathcal{V}_{u,v}^{1,1}}x^{|B_{G_{e}}(S)|}\, y^{|S|} = \sum_{S^\prime \subseteq V(G),{\bf 1}_{S^\prime}(v) = 1}x^{|B_{G}(S^\prime)|}\,y^{|S^\prime|+1} = yB(G;x,y)_{v}^{1}.
\end{equation}
Let $S \in \mathcal{V}_{u,v}^{1,0}$. Hence, $|S| = |S^\prime|+1$. Thus, $B_{G_{e}}(S) = B_{G}(S^\prime)\cup\{u\}$ whenever $N_{G}(v)\cap S^\prime = \emptyset$ and $B_{G_{e}}(S) = B_{G}(S^\prime)$ whenever $N_{G}(v)\cap S^\prime \not= \emptyset$. Due to the bijection between $\mathcal{V}_{u,v}^{1,0}$ and $\{S^\prime \subseteq V(G):1_{S^\prime}(v) = 0,N_{G}(v)\cap S^\prime = \emptyset\} \cup \{S^\prime \subseteq V(G): {\bf 1}_{S^\prime}(v) = 0,N_{G}(v)\cap S^\prime \not= \emptyset\}$. Then,
\begin{equation}\label{eq:BFE4}
    B(G_{e};x,y)_{u^{0}}^{1}  = \sum_{S \in \mathcal{V}_{u,v}^{1,0}}x^{|B_{G_{e}}(S)|}\,y^{|S|} = xyB(G;x,y)_{v^{0}}^{0}+yB(G;x,y)_{v^{1}}^{0}
\end{equation}
    
\end{proof}

The following result shows an implicit formula for the boundary polynomial of path graphs.

\begin{proposition}\label{p:Pn}
 For every $n\ge 0$
 \begin{equation}\label{eq:Pn}
 B(P_n;x,y)=  \left(\begin{array}{cccc}1&1&1&1\end{array}\right)
 \left(\begin{array}{cccc} 1 & 1 & 0 & 0 \\ 0 & 0 & x & x \\ xy & y & 0 & 0 \\ 0 & 0 & y & y \end{array}\right)^{n}
 \left(\begin{array}{c}0\\1\\0\\0\end{array}\right).
 \end{equation}
\end{proposition}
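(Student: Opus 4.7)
The plan is to apply Lemma \ref{l:pendent+} iteratively. Label the vertices of $P_n$ as $v_1,v_2,\dots,v_n$ with $v_iv_{i+1}\in E(P_n)$, and observe that $P_{n+1}$ is obtained from $P_n$ by attaching a new pendant vertex $v_{n+1}$ to the endpoint $v_n$. For each $n\ge 1$, define the column vector
\[
\mathbf{b}_n \;:=\; \bigl(B(P_n;x,y)^{0}_{v_n^0},\; B(P_n;x,y)^{0}_{v_n^1},\; B(P_n;x,y)^{1}_{v_n^0},\; B(P_n;x,y)^{1}_{v_n^1}\bigr)^{T}.
\]
Lemma \ref{l:pendent+}, applied with $G=P_n$, $v=v_n$, $u=v_{n+1}$ and $G_e=P_{n+1}$, immediately yields the recursion $\mathbf{b}_{n+1}=M\,\mathbf{b}_n$, where $M$ denotes the $4\times 4$ matrix appearing in \eqref{eq:Pn}.

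Next I would compute the base case $\mathbf{b}_1$ straight from the definitions in \eqref{eq:def1}. Since $P_1$ consists of a single vertex $v_1$ with no neighbors, only $S=\emptyset$ contributes to $B(P_1;x,y)^{0}_{v_1^0}$ and only $S=\{v_1\}$ to $B(P_1;x,y)^{1}_{v_1^0}$, while the two ``$v_1^1$'' components vanish; hence $\mathbf{b}_1=(1,0,y,0)^{T}$. A one-line matrix-vector product shows $M(0,1,0,0)^{T}=(1,0,y,0)^{T}=\mathbf{b}_1$, so by induction $\mathbf{b}_n=M^{n-1}\mathbf{b}_1=M^{n}(0,1,0,0)^{T}$ for every $n\ge 1$. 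The case $n=0$ is checked separately: $P_0$ is the empty graph, so $B(P_0;x,y)=1$, and indeed $(1,1,1,1)(0,1,0,0)^{T}=1$.

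To conclude, I would invoke Remark \ref{remark:frank}, which by adding its three displayed identities gives
\[
B(P_n;x,y) \;=\; B(P_n;x,y)^{0}_{v_n^0}+B(P_n;x,y)^{0}_{v_n^1}+B(P_n;x,y)^{1}_{v_n^0}+B(P_n;x,y)^{1}_{v_n^1} \;=\; (1,1,1,1)\,\mathbf{b}_n,
\]
which is precisely \eqref{eq:Pn}. The argument is essentially bookkeeping once Lemma \ref{l:pendent+} is available; the only mildly subtle point is identifying the seed $(0,1,0,0)^{T}$, which is not the four-component vector of any actual graph but rather the formal preimage $M^{-1}\mathbf{b}_1$. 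Verifying $M(0,1,0,0)^{T}=\mathbf{b}_1$, together with the separate $n=0$ check, handles this discrepancy cleanly and is the main obstacle to the otherwise routine induction.
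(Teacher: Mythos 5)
Your proof is correct and follows essentially the same route as the paper: iterate Lemma \ref{l:pendent+} along the path to get the recursion $\mathbf{b}_{n+1}=M\,\mathbf{b}_n$, identify the seed $(0,1,0,0)^{T}$, and sum the four components via Remark \ref{remark:frank}. If anything, your explicit componentwise check that $M(0,1,0,0)^{T}=\mathbf{b}_1=(1,0,y,0)^{T}$ anchors the induction slightly more carefully than the paper, which only verifies that the summed product equals $1+y$ at $n=1$.
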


\begin{proof}
Since $P_1 \simeq E_1$ we have $B(P_1;x,y)=B(E_1;x,y)=y+1$
and

 \begin{equation}
\nonumber
 \left(\begin{array}{cccc}1&1&1&1\end{array}\right)
 \left(\begin{array}{cccc} 1 & 1 & 0 & 0 \\ 0 & 0 & x & x \\ xy & y & 0 & 0 \\ 0 & 0 & y & y \end{array}\right)
 \left(\begin{array}{c}0\\1\\0\\0\end{array}\right)=1+y.
 \end{equation}

Let $V(P_n)=\{v_1\ldots,v_n\}$ with $v_{i}v_{i+1} \in E(P_n)$ for $1\le i\le n-1$.
By Lemma \ref{l:pendent+} we have
  \[
 \left(\begin{array}{c} B(P_n;x,y)^{0}_{v^{0}_n} \\ B(P_n;x,y)^{0}_{v^{1}_n} \\ B(P_n;x,y)^{1}_{v^{0}_n} \\ B(P_n;x,y)^{1}_{v^{1}_n} \end{array}\right)
 =
 \left(\begin{array}{cccc} 1 & 1 & 0 & 0 \\ 0 & 0 & x & x \\ xy & y & 0 & 0 \\ 0 & 0 & y & y \end{array}\right)
 \left(\begin{array}{c} B(P_{n-1};x,y)^{0}_{v^{0}_{n-1}} \\ B(P_{n-1};x,y)^{0}_{v^{1}_{n-1}} \\ B(P_{n-1};x,y)^{1}_{v^{0}_{n-1}} \\ B(P_{n-1};x,y)^{1}_{v^{1}_{n-1}} \end{array}\right).
 \]
 Applied Lemma \ref{l:pendent+} to the right part of the equality one more time, we get

 \[
 \left(\begin{array}{c} B(P_n;x,y)^{0}_{v^{0}_n} \\ B(P_n;x,y)^{0}_{v^{1}_n} \\ B(P_n;x,y)^{1}_{v^{0}_n} \\ B(P_n;x,y)^{1}_{v^{1}_n} \end{array}\right)
 =
 \left(\begin{array}{cccc} 1 & 1 & 0 & 0 \\ 0 & 0 & x & x \\ xy & y & 0 & 0 \\ 0 & 0 & y & y \end{array}\right)\left(\begin{array}{cccc} 1 & 1 & 0 & 0 \\ 0 & 0 & x & x \\ xy & y & 0 & 0 \\ 0 & 0 & y & y \end{array}\right) \left(\begin{array}{c} B(P_{2};x,y)^{0}_{v^{0}_{n-2}} \\ B(P_{2};x,y)^{0}_{v^{1}_{n-2}} \\ B(P_{2};x,y)^{1}_{v^{0}_{n-2}} \\ B(P_{2};x,y)^{1}_{v^{1}_{n-2}} \end{array}\right)
 \]

Thus, by applying Lemma \ref{l:pendent+} subsequently we obtain
  \[
 \left(\begin{array}{c} B(P_n;x,y)^{0}_{v^{0}_n} \\ B(P_n;x,y)^{0}_{v^{1}_n} \\ B(P_n;x,y)^{1}_{v^{0}_n}\\ B(P_n;x,y)^{1}_{v^{1} _n} \end{array}\right)
 =
 \left(\begin{array}{cccc} 1 & 1 & 0 & 0 \\ 0 & 0 & x & x \\ xy & y & 0 & 0 \\ 0 & 0 & y & y \end{array}\right)^{n}
 \left(\begin{array}{c} 0\\1\\0\\0\end{array}\right).
 \]

By (left) multiplication of
$\left(\begin{array}{cccc} 1 & 1 & 1 &1 \end{array}\right)$ 
we obtain the result for $n\ge2$. 
We may consider that \eqref{eq:Pn} holds for $n=0$,
too. Note that if $G=(\emptyset,\emptyset)$ we have
$B(G;x)=x^{n_\emptyset+\p(\emptyset)}=1=(1\ 1\ 1\ 1)\,(0\ 1\ 0\
0)^T$.
\end{proof}

Define as $G^{\prime}$ the graph obtained from $G$ by joining with a new edge two non-adjacent vertices of $G$. For $u,v \in V(G)$ and $i,j,k \in \{0,1\}$, we define the following, where $\widetilde{A}_{i,j} = \{S\subseteq V(G)\,:\,{\bf 1}_{S}(u)=i,{\bf 1}_{S}(v)=j,(S\setminus \{v\})\cap N(u) \not= \emptyset\}$ and $\widetilde{B}_{i,j} = \{S\subseteq V(G)\,:\,{\bf 1}_{S}(u)=i,{\bf 1}_{S}(v)=j,(S\setminus \{v\})\cap N(u) = \emptyset\}$, and $\widetilde{C}_{i,j} = \{S\subseteq V(G)\,:\,{\bf 1}_{S}(u)=i,{\bf 1}_{S}(v)=j\}$.

$$B(G;x,y)_{u^{1},v}^{i,j} := \sum_{S\subseteq\widetilde{A}_{i,j}}x^{|B(S)|} y^{|S|},$$

$$B(G;x,y)_{u^{0},v}^{i,j} := \sum_{S\subseteq\widetilde{B}_{i,j}}x^{|B(S)|}y^{|S|},$$

$$B(G;x,y)_{u,v^{k}}^{i,j} := B(G;x,y)_{v^{k},u}^{j,i},$$

$$B(G;x,y)_{u,v}^{i,j} := \sum_{S\subseteq\widetilde{C}_{i,j}}x^{|B(S)|} y^{|S|},$$

\begin{remark}\label{re:1}
    The following equalities follow from previous definitions,
    $$B(G;x,y) = B(G;x,y)_{u,v}^{0,0} + B(G;x,y)_{u,v}^{1,0} + B(G;x,y)_{u,v}^{0,1} + B(G;x,y)_{u,v}^{1,1},$$
    $$B(G;x,y)_{u,v}^{1,0} = B(G;x,y)_{u,v^{0}}^{1,0} + B(G;x,y)_{u,v^{1}}^{1,0},$$
    $$B(G;x,y)_{u,v}^{0,1} = B(G;x,y)_{u^{0},v}^{0,1}+B(G;x,y)_{u^{1},v}^{0,1}.$$
\end{remark}

Let $G$ be a graph, and let $e\in E$. Consider now the graph $G-e$ obtained by removing the edge $e$ from $G$. The following result gives an algebraic relation between the polynomials $B(G;x,y)$ and $B(G-e;x,y)$.

\begin{theorem}\label{th:G+e}
    Let $G$ be a graph with an edge $e$ joining the adjacent vertices $u,v\in V(G)$. Then, we have

    \begin{equation}\label{eq:G+e}
        B(G;x,y)-B(G-e;x,y)= (x-1) \left( B(G-e;x,y)^{0,1}_{u^{0},v} + B(G-e;x,y)^{1,0}_{u,v^{0}}
 \right).
    \end{equation}
\end{theorem}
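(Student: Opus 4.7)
The plan is to compare $|B_G(S)|$ with $|B_{G-e}(S)|$ subset by subset, then sum after partitioning $\mathcal{P}(V(G))$ according to how $S$ meets the endpoints of $e$. The partition used for the right-hand side of the claim, $\mathcal{V}^{i,j} := \{S\subseteq V(G):\mathbf{1}_S(u)=i,\,\mathbf{1}_S(v)=j\}$ for $i,j\in\{0,1\}$, is exactly the one that trivialises two of the four cases and isolates the edge-boundary change in the remaining two.

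The first key reduction is that for every vertex $w\notin\{u,v\}$, the neighborhoods $N_G(w)$ and $N_{G-e}(w)$ coincide, so $w\in B_G(S)\Leftrightarrow w\in B_{G-e}(S)$. Consequently $B_G(S)\,\triangle\, B_{G-e}(S)\subseteq\{u,v\}$ for every $S$. On $\mathcal{V}^{0,0}$ and $\mathcal{V}^{1,1}$ both endpoints of $e$ sit on the same side of the cut determined by $S$, so neither $u$ nor $v$ changes its boundary status; these two classes contribute $0$ to $B(G;x,y)-B(G-e;x,y)$. The interesting work happens on $\mathcal{V}^{1,0}$ and $\mathcal{V}^{0,1}$.

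For $S\in\mathcal{V}^{1,0}$ I would argue that $v\in B_G(S)$ automatically, because $u\in S\cap N_G(v)$; but $v\in B_{G-e}(S)$ iff $(S\setminus\{u\})\cap N(v)\neq\emptyset$, which is exactly the condition defining $\widetilde{A}_{1,0}$ versus $\widetilde{B}_{1,0}$ with $u$ and $v$ interchanged. Hence $|B_G(S)|-|B_{G-e}(S)|$ equals $1$ when $(S\setminus\{u\})\cap N(v)=\emptyset$ and $0$ otherwise, giving a contribution
\[
\sum_{S\in\mathcal{V}^{1,0},\,(S\setminus\{u\})\cap N(v)=\emptyset}\bigl(x^{|B_{G-e}(S)|+1}-x^{|B_{G-e}(S)|}\bigr)\,y^{|S|} \;=\;(x-1)\,B(G-e;x,y)^{1,0}_{u,v^{0}}.
\]
The case $S\in\mathcal{V}^{0,1}$ is entirely symmetric under swapping the roles of $u$ and $v$, and yields $(x-1)\,B(G-e;x,y)^{0,1}_{u^{0},v}$. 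Summing the four cases gives the claimed identity.

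The main obstacle is really notational rather than mathematical. One has to be careful that in the condition $(S\setminus\{u\})\cap N(v)=\emptyset$ it does not matter whether $N(v)$ is interpreted in $G$ or in $G-e$: since the only discarded neighbor of $v$ is $u$ itself and $u$ is already excised from $S$, the two interpretations yield the same set. The other piece of care is matching each analysed subcollection to the correct multi-indexed symbol $\widetilde{A}_{i,j}$ or $\widetilde{B}_{i,j}$ in the definitions preceding the theorem, and in particular recognising that the case $\mathcal{V}^{1,0}$ corresponds to $v^{0}$ (the ``$N(v)$ misses $S\setminus\{u\}$'' condition) and $\mathcal{V}^{0,1}$ to $u^{0}$.
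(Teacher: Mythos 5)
Your proposal is correct and follows essentially the same route as the paper's own proof: the same partition of $\mathcal{P}(V(G))$ by $(\mathbf{1}_S(u),\mathbf{1}_S(v))$, the same observation that the classes $\mathcal{V}^{0,0}$ and $\mathcal{V}^{1,1}$ contribute nothing, and the same refinement of $\mathcal{V}^{1,0}$ and $\mathcal{V}^{0,1}$ by whether $(S\setminus\{u\})\cap N(v)$ (resp.\ $(S\setminus\{v\})\cap N(u)$) is empty, yielding the factor $(x-1)$. Your remark that $N(v)$ may be read in either $G$ or $G-e$ in that condition is a point the paper leaves implicit but is worth making.
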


\begin{proof}
    Consider $S \subseteq V(G)$. If $u,v \in S$ or $u,v \not\in S$, then $\partial_{G}(S) = \partial_{G-e}(S)$. Therefore is clear that,
    $$B(G;x,y)_{u,v}^{1,1} = B(G-e;x,y)_{u,v}^{1,1} \qquad\qquad \text{ and } \qquad\qquad  B(G;x,y)_{u,v}^{0,0} = B(G-e;x,y)_{u,v}^{0,0}.$$
    Now, assume that $\{u,v\}\cap S = \{u\}$, then if $(S\setminus\{u\})\cap N(v) = \emptyset$, then $B_{G}(S) = B_{G-e}(S)\cup\{v\}$ and $|B_{G}(S)| = |B_{G-e}(S)|+1$. If $(S\setminus \{u\})\cap N(v) \not= \emptyset$, then $B_{G}(S) = B_{G-e}(S)$. Then, from the previous definitions, we have
    $$B(G;x,y)_{u,v}^{1,0} = xB(G-e;x,y)_{u,v^{0}}^{1,0}+B(G-e;x,y)_{u,v^{1}}^{1,0}.$$
    Analogously, for $\{u,v\}\cap S = \{v\}$, we have
    $$B(G;x,y)_{u,v}^{0,1} = xB(G-e;x,y)_{u^{0},v}^{0,1}+B(G-e;x,y)_{u^{1},v}^{0,1}.$$
    Finally, Remark \ref{re:1} yields the desired result.
\end{proof}

The result above has the following direct consequences.

\begin{proposition}\label{p:Cn-Pn}
    Let $P_n,C_n$ be the path and cycle graphs with vertices $\{v_1,v_2,\ldots,v_n\}$ such that $v_i\sim v_{i+1}$ for every $1\leq i<n$. Then, we have 

    \begin{equation}\label{eq:Cn-Pn}
        B(C_n;x,y)=B(P_n;x,y) + 2(x-1) \, B(P_n;x,y)^{0,1}_{v_1^{0},v_n}.
    \end{equation}
\end{proposition}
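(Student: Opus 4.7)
The plan is to apply Theorem \ref{th:G+e} directly, taking $G=C_n$, $G-e=P_n$, $u=v_1$, and $v=v_n$, so that $e=uv$ is the ``closing edge'' that turns the path into the cycle. Substituting into \eqref{eq:G+e} immediately yields
\begin{equation*}
  B(C_n;x,y)-B(P_n;x,y)=(x-1)\bigl(B(P_n;x,y)^{0,1}_{v_1^{0},v_n}+B(P_n;x,y)^{1,0}_{v_1,v_n^{0}}\bigr),
\end{equation*}
so what remains is to justify the factor of $2$ in the statement, i.e.\ to show that the two generating functions inside the parenthesis coincide.

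The key step is to exploit the obvious order-reversing automorphism of $P_n$, namely $\sigma:V(P_n)\to V(P_n)$ with $\sigma(v_i)=v_{n+1-i}$. This is a graph automorphism, so it preserves $|S|$ and $|B(S)|$ for every $S\subseteq V(P_n)$, and therefore induces an identity on the full boundary polynomial $B(P_n;x,y)$. I will unfold the definitions and check that $\sigma$ maps the index family defining $B(P_n;x,y)^{0,1}_{v_1^{0},v_n}$ bijectively onto the one defining $B(P_n;x,y)^{1,0}_{v_1,v_n^{0}}$. Concretely, a set $S$ contributing to the first generating function satisfies $v_1\notin S$, $v_n\in S$, and $(S\setminus\{v_n\})\cap N_{P_n}(v_1)=\emptyset$; since $N_{P_n}(v_1)=\{v_2\}$ and $v_2\ne v_n$ for $n\ge 3$, this is the condition that $v_1,v_2\notin S$ and $v_n\in S$. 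Applying $\sigma$ turns this into $v_n,v_{n-1}\notin S'$ and $v_1\in S'$, which is precisely the condition defining the index family for $B(P_n;x,y)^{1,0}_{v_1,v_n^{0}}$ (using $N_{P_n}(v_n)=\{v_{n-1}\}$). Hence the two generating functions are equal and we pick up the factor $2$.

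The only step that requires some care is the translation between the constraint $(S\setminus\{v\})\cap N(u)=\emptyset$ in the subscripts and the simpler description ``$v_1,v_2\notin S$, $v_n\in S$'' (and its mirror image). I would write this out explicitly, noting that for $n\ge 3$ the vertex $v_2$ is distinct from $v_n$, so the removal of $v_n$ from $S$ plays no role in testing membership of $v_2$; the corner cases $n=1,2$ can be handled separately by direct computation since then $C_n$ is not defined in the usual sense or coincides with $P_n$ modulo isolated simplifications. Once the bijection is established, the proposition follows.
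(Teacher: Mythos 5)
Your proposal is correct and is essentially the argument the paper intends: the proposition is stated as a direct consequence of Theorem \ref{th:G+e} applied to the closing edge $v_1v_n$, and your use of the order-reversing automorphism $\sigma(v_i)=v_{n+1-i}$ is exactly the symmetry needed to merge the two terms $B(P_n;x,y)^{0,1}_{v_1^{0},v_n}$ and $B(P_n;x,y)^{1,0}_{v_1,v_n^{0}}$ into the factor of $2$. Your unpacking of the subscript conventions (including the identity $B(G;x,y)^{1,0}_{u,v^{0}}=B(G;x,y)^{0,1}_{v^{0},u}$ and the observation that $v_2\neq v_n$ for $n\ge 3$) is accurate, so the proof is complete.
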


\begin{proposition}\label{p:G1-G2}
Let $G_1$ and $G_2$ be two disjoint graphs. Consider the graph $G$ obtained by adding a new edge $uv$ to $G_1\uplus G_2$, where $u\in V(G_1)$ and $v\in V(G_2)$. Then, 

\begin{equation}
 B(G;x,y)
 =\displaystyle
 \left(\begin{array}{ccc} B(G_1;x,y)^{1}_{u} & B(G_1;x,y)^{0}_{u^{0}} & B(G_1;x,y)^{0}_{u^{1}} \end{array}\right)
 \left(\begin{array}{ccc} 1 & x & 1 \\ x & 1 & 1 \\ 1 & 1 & 1 \end{array}\right)
 \left(\begin{array}{c} B(G_2;x,y)^{1}_{v} \\ B(G_2;x,y)^{0}_{v^{0}} \\ B(G_2;x,y)^{0}_{v^{1}}\end{array}\right).
\end{equation}

\end{proposition}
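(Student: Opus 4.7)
My plan is to partition the power set $\mathcal{P}(V(G)) = \mathcal{P}(V(G_1)) \times \mathcal{P}(V(G_2))$ according to the intersection of $S$ with $\{u,v\}$. For any $S \subseteq V(G)$, write $S_1 = S \cap V(G_1)$ and $S_2 = S \cap V(G_2)$. Since $uv$ is the only edge connecting $G_1$ and $G_2$, the outer boundary $B_G(S)$ equals $B_{G_1}(S_1) \cup B_{G_2}(S_2)$ together with a possible extra contribution from $u$ or $v$ caused by the new edge. I would handle the four cases $(u \in S, v \in S)$, $(u \in S, v \notin S)$, $(u \notin S, v \in S)$, $(u \notin S, v \notin S)$ separately, and within each case subdivide further according to whether $u$ (resp.\ $v$) already has a neighbor in $S_1$ (resp.\ $S_2$).

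The case analysis will yield the following contributions. When $u \in S_1$ and $v \in S_2$, the edge $uv$ is internal and $|B_G(S)| = |B_{G_1}(S_1)|+|B_{G_2}(S_2)|$, giving $B(G_1;x,y)_u^1 \cdot B(G_2;x,y)_v^1$. When $u \in S_1$ and $v \notin S_2$, the vertex $v$ must be included in $B_G(S)$; it contributes an extra factor $x$ precisely when $N_{G_2}(v) \cap S_2 = \emptyset$, yielding $B(G_1;x,y)_u^1 \cdot \bigl(x\,B(G_2;x,y)_{v^0}^0 + B(G_2;x,y)_{v^1}^0\bigr)$. The symmetric case gives $\bigl(x\,B(G_1;x,y)_{u^0}^0 + B(G_1;x,y)_{u^1}^0\bigr) \cdot B(G_2;x,y)_v^1$. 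Finally, when $u \notin S_1$ and $v \notin S_2$, the edge $uv$ contributes nothing and the contribution factors as $\bigl(B(G_1;x,y)_{u^0}^0 + B(G_1;x,y)_{u^1}^0\bigr)\bigl(B(G_2;x,y)_{v^0}^0 + B(G_2;x,y)_{v^1}^0\bigr)$, using the identity $B(G_1;x,y)_u^0 = B(G_1;x,y)_{u^0}^0 + B(G_1;x,y)_{u^1}^0$ from Remark \ref{remark:frank}.

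After collecting the four expressions, I would expand the right-hand side of the claimed matrix identity. Denoting $A = B(G_1;x,y)_u^1$, $P = B(G_1;x,y)_{u^0}^0$, $Q = B(G_1;x,y)_{u^1}^0$ and $R, X, Y$ similarly for $G_2$, the matrix product yields
\begin{equation*}
    AR + x\,AX + AY + x\,PR + QR + PX + PY + QX + QY,
\end{equation*}
which coincides term-by-term with the four case contributions. This matching completes the proof.

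The main obstacle is the bookkeeping in the second and third cases: one must verify carefully that the extra factor of $x$ is applied exactly when the ``outside'' endpoint of $uv$ has no neighbor in the corresponding $S_i$, so that the refined polynomials with subscripts $v^0$, $v^1$, $u^0$, $u^1$ are the right objects to use. Once this is pinned down, the rest is a routine expansion.
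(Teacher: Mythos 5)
Your proposal is correct and follows essentially the same route as the paper's own proof: a four-way case analysis on the membership of $u$ and $v$ in $S$, refined in the mixed cases by whether the excluded endpoint already has a neighbor in the corresponding $S_i$, which is exactly where the extra factor of $x$ arises. Your explicit expansion of the matrix product matches the four case contributions term by term, a verification the paper leaves implicit.
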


\begin{proof}
Let $n_1$, $n_2$ and $n$ be the order of the graphs $G_1$, $G_2$ and $G$, respectively. Hence, $n=n_1+n_2$. For every $S \subseteq V(G)$ we define
$S_{1} := S \cap V(G_1)$ and $S_{2} := S \cap V(G_2)$, thus $S_1\cap S_2=\emptyset$. Note that if $u,v \in S$, then $B_{G}(S)= B_{G_{1}}(S_{1}) \cup B_{G_{2}}(S_{2})$, and so
$|S|= |S_{1}| + |S_{2}|$. Then
\begin{equation}
\nonumber
B(G;x,y)^{1,1}_{u,v} = B(G_{1};x,y)^{1}_{u}\,B(G_{2};x,y)^{1}_{v}.
\end{equation}

Similarly,  if $u,v \notin S$, we deduce
\begin{equation}
\nonumber
B(G;x,y)^{0,0}_{u,v} = B(G_{1};x,y)^{0}_{u}B(G_{2};x,y)^{0}_{v}.
\end{equation}

Now assume that $u \in S$ and $v \notin S$. If $(S\setminus\{u\})\cap N(v)=\emptyset$,  then
$B_{G}(S)= B_{G_{1}}(S_{1}) \cup B_{G_{2}}(S_{2}) \cup \{v\}$, and  $x\,B(G_{1};x,y)^{1}_{u}\,B(G_{2};x,y)^{0}_{v^{0}}$ is a summation term in $B(G;x,y)^{1,0}_{u,v}$.
If $(S\setminus\{u\})\cap N(v)\neq \emptyset$, then $B_{G}(S)= B_{G_{1}}(S_{1}) \cup B_{G_{2}}(S_{2})$, and 
$B(G_{1};x,y)^{1}_{u}\,B(G_{2};x,y)^{0}_{v^{1}}$ is the remaining term in $B(G;x,y)^{1,0}_{u,v}$. Therefore
\begin{equation}
\nonumber
B(G;x,y)^{1,0}_{u,v} = B(G_{1};x,y)^{1}_{u}\Big(x\,B(G_{2};x,y)^{0}_{v^{0}}
+ B(G_{2};x,y)^{0}_{v^{1}}\Big).
\end{equation}
Similarly, if  $v \in S$ and $u \notin S$, we deduce
\begin{equation}
\nonumber
B(G;x,y)^{0,1}_{u,v} = B(G_{2};x,y)^{1}_{v} \Big(x\,B(G_{1};x,y)^{0}_{u^{0}}
+ B(G_{1};x,y)^{0}_{u^{1}}\Big).
\end{equation}
The desired result follows by combining these four equations above.
\end{proof}

Let $V(G) = \{v_{1},\ldots,v_{n}\}$ and let $\mathcal H = \{H_{i}\}_{i=1}^{n}$ be a family of graphs. The corona product $G \odot \mathcal H$ is the graph obtained by joining the $j$-th vertex of $G$ with all the vertices of $H_{j}$. We have that $V(G \odot \mathcal H) = V(G)\cup\left(\cup_{j=1}^{n}V(H_{j})\right)$. We can clearly see that the corona product is a non-commutative and non-associative operation. In particular, we have that $E_{1} \odot \{G\}$ is isomorphic to the graph $E_{1} + G$. Our next result provides a relation between the boundary polynomials of $P_2\odot\{G_1,G_2\}$, $E_1+G_1$ and $E_1+G_2$.

\begin{theorem}\label{t:DobleJoin}
 Let $G_1,G_2$ be two graphs of orders $n_1,n_2\ge 1$, respectively. Then
  \begin{eqnarray}
  \nonumber
  B(P_2\odot\{G_1,G_2\}; x,y) &=& B(E_1+G_1; x,y)\cdot B(E_1+G_2; x,y)\\
  &&+y(x-1)\Big( (x+y)^{n_{1}}+(x+y)^{n_{2}} \Big).
  \end{eqnarray}
\end{theorem}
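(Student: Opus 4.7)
The plan is to observe that $P_2\odot\{G_1,G_2\}$ is precisely the graph obtained from the disjoint union $(E_1+G_1)\uplus(E_1+G_2)$ by adding a single edge $v_1v_2$ between the two ``$E_1$'' vertices, and then to apply Proposition~\ref{p:G1-G2} with $u=v_1$ and $v=v_2$. Writing $V(P_2)=\{v_1,v_2\}$, the corona construction joins $v_i$ to every vertex of $G_i$, so $\{v_i\}\cup V(G_i)$ induces $E_1+G_i$ and the only edge between the two parts is $v_1v_2$.

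Next I would compute the three partial polynomials from \eqref{eq:def1} that appear in Proposition~\ref{p:G1-G2} for the graph $E_1+G_i$ at the vertex $v_i$, using that $N_{E_1+G_i}(v_i)=V(G_i)$. When $v_i\in S$, setting $S':=S\cap V(G_i)$, the outer boundary of $S$ in $E_1+G_i$ is exactly $V(G_i)\setminus S'$, so
\[
B(E_1+G_i;x,y)^{1}_{v_i}=\sum_{S'\subseteq V(G_i)} x^{n_i-|S'|}\,y^{|S'|+1}=y(x+y)^{n_i}.
\]
When $v_i\notin S$ and $N(v_i)\cap S=\emptyset$, the only admissible set is $S=\emptyset$, so $B(E_1+G_i;x,y)^{0}_{v_i^{0}}=1$. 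When $v_i\notin S$ and $N(v_i)\cap S\neq\emptyset$, the vertex $v_i$ belongs to the boundary and $B_{E_1+G_i}(S)=\{v_i\}\cup B_{G_i}(S)$, whence $B(E_1+G_i;x,y)^{0}_{v_i^{1}}=x\bigl(B(G_i;x,y)-1\bigr)$.

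Finally, I would substitute these into the matrix identity of Proposition~\ref{p:G1-G2}. The key algebraic observation is that the $3\times 3$ matrix appearing there differs from the all-ones matrix only by an extra summand $(x-1)$ at the entries $(1,2)$ and $(2,1)$. Writing $a_i,b_i,c_i$ for the three partials above and using $a_i+b_i+c_i=B(E_1+G_i;x,y)$ (Remark~\ref{remark:frank}), the bilinear form collapses to
\[
B(E_1+G_1;x,y)\cdot B(E_1+G_2;x,y)+(x-1)\bigl(a_1 b_2+b_1 a_2\bigr).
\]
Substituting $a_i=y(x+y)^{n_i}$ and $b_i=1$ then yields exactly the claimed identity.

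The main obstacle is simply the careful identification of the three partial polynomials for $E_1+G_i$; once those are pinned down, the conclusion falls out of the ``almost all-ones'' structure of the matrix in Proposition~\ref{p:G1-G2}, and no further computation beyond substitution is required.
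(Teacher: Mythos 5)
Your proof is correct, and it reaches the identity by a slightly different route than the paper. The paper also starts from the observation that $P_2\odot\{G_1,G_2\}$ minus the edge $v_1v_2$ is $(E_1+G_1)\uplus(E_1+G_2)$, but it then invokes Theorem~\ref{t:factors} together with the edge-removal formula of Theorem~\ref{th:G+e}, so the correction term appears as $(x-1)\bigl(B(H-v_1v_2;x,y)^{1,0}_{v_1,v_2^0}+B(H-v_1v_2;x,y)^{0,1}_{v_1^0,v_2}\bigr)$, and each of these partial sums is evaluated directly: the condition $v_1\in S$, $v_2\notin S$, $S\cap N(v_2)=\emptyset$ forces $S\subseteq V(G_1)\cup\{v_1\}$ with $|B(S)|=n_1+1-|S|$, giving $y(x+y)^{n_1}$ by a binomial count. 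You instead invoke Proposition~\ref{p:G1-G2} (the bridge-edge formula), compute the three partials $a_i=B(E_1+G_i;x,y)^1_{v_i}=y(x+y)^{n_i}$, $b_i=B(E_1+G_i;x,y)^0_{v_i^0}=1$, $c_i=B(E_1+G_i;x,y)^0_{v_i^1}=x\bigl(B(G_i;x,y)-1\bigr)$ (all three are right, and $a_i+b_i+c_i=B(E_1+G_i;x,y)$ is consistent with Corollary~\ref{c:E+G}), and exploit that the matrix in Proposition~\ref{p:G1-G2} is the all-ones matrix plus $(x-1)$ in the $(1,2)$ and $(2,1)$ entries, so the correction term is $(x-1)(a_1b_2+b_1a_2)$. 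The two correction terms are literally the same quantities — your product $a_1b_2=y(x+y)^{n_1}\cdot 1$ is the paper's sum over the same family of sets, factored across the two components — so the arguments are essentially equivalent in content; what your version buys is that the set-by-set boundary analysis is already packaged inside Proposition~\ref{p:G1-G2}, leaving only substitution and the observation that $c_1,c_2$ drop out of the correction term, while the paper's version avoids computing the (ultimately irrelevant) partials $b_i$ and $c_i$ at all.
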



\begin{proof}
Let $H:=P_2\odot\{G_1,G_2\}, V(P_2)=\{v_1,v_2\}$ and $|V(H)|=n_1+n_2+2$. Then $H-v_1v_2\simeq (E_1+G_1)\uplus(E_1+G_2)$. By Theorems \ref{t:factors} and \ref{th:G+e} we have
\[
B(H; x,y) - B(E_1+G_1; x,y)\, B(E_1+G_2; x,y) = (x-1)\big( B(H-v_{1}v_{2};x,y)_{v_{1},v_{2}^{0}}^{1,0}+B(H-v_{1}v_{2};x,y)_{v_{1}^{0},v_{2}}^{0,1}\big).
\]
Let us now calculate $B(H-v_{1}v_{2};x,y)_{v_{1},v_{2}^{0}}^{1,0}$. Then, consider $S \subseteq V(H-v_{1}v_{2})$ with $v_{1} \in S$, $v_{2} \notin S$ and $S\cap N(v_{2}) = \emptyset$. Hence, we have $S \subseteq (V(G_{1})\cup\{v_{1}\})$, $|B(S)| = n_{1}+1-|S|$.  Since the number of such
subsets $S$ with size $i+1$ is ${n_1 \choose i}$, from (\ref{eq:Poly2}), we have
\begin{equation}
\nonumber
B(H-v_1v_2;x,y)_{v_1,v^{0}_2}^{1,0}= \sum_{i=0}^{n_{1}}\binom{n_{1}}{i}x^{n_{1}-i}y^{i+1} = y\,(x+y)^{n_{1}}.
\end{equation}

Analogously, we have
\begin{equation}
\nonumber
B(H-v_1v_2;x,y)_{v^{0}_1,v_2}^{0,1} = B(H-v_{1}v_{2};x,y)_{v_{2}^{0},v_{1}}^{1,0} = y\,(x+y)^{n_{2}}.
\end{equation}
\end{proof}

Consider two star graphs $S_{r},S_{t}$ with central vertices $v_r$ and $w_t$, respectively.
  The {\em double star graph} $S_{r,t}$ is the graph obtained by joining $v_r$ to $w_t$ with an additional edge.
 Then $S_{r,s}\simeq P_2\odot\{E_{r-1},E_{s-1}\}$ and Theorem \ref{t:DobleJoin} implies the following result.


\begin{proposition}\label{p:S_rt}
For $r,t\geq 2$, we have
\[
B(S_{r,t}; x,y) = B(S_r; x,y)\, B(S_t; x,y) + y(x-1)\Big( (x+y)^{r-1} + (x+y)^{t-1} \Big).
\]
\end{proposition}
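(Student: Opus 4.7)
The plan is to apply Theorem \ref{t:DobleJoin} directly, with the main work being the identification of $S_{r,t}$ as a corona product and of $E_1+E_{k-1}$ as a star.

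First I would verify the structural isomorphism $S_{r,t}\simeq P_2\odot\{E_{r-1},E_{t-1}\}$. Recall that $S_{r,t}$ is built from two stars $S_r$ and $S_t$ with central vertices $v_r$ and $w_t$ by joining $v_r$ and $w_t$ with an edge. On the other hand, $P_2\odot\{E_{r-1},E_{t-1}\}$ takes the two vertices $v_1,v_2$ of $P_2$ (which are adjacent via the edge of $P_2$) and joins $v_1$ to every vertex of $E_{r-1}$ and $v_2$ to every vertex of $E_{t-1}$. Identifying $v_1$ with $v_r$ and $v_2$ with $w_t$ gives the claimed isomorphism, since the leaves around each central vertex are precisely the vertices of the corresponding empty graph.

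Next I would observe that $E_1+E_{k-1}\simeq S_k$ for every $k\ge 2$. Indeed, the join adds all edges between the single vertex of $E_1$ and the $k-1$ vertices of $E_{k-1}$, producing a vertex of degree $k-1$ adjacent to $k-1$ pairwise nonadjacent vertices; this is exactly the star $S_k$. Since the boundary polynomial is a graph isomorphism invariant, we obtain $B(E_1+E_{r-1};x,y)=B(S_r;x,y)$ and $B(E_1+E_{t-1};x,y)=B(S_t;x,y)$.

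Finally, I would apply Theorem \ref{t:DobleJoin} with $G_1:=E_{r-1}$ and $G_2:=E_{t-1}$, so that $n_1=r-1$ and $n_2=t-1$. The theorem yields
\[
B(P_2\odot\{E_{r-1},E_{t-1}\};x,y)=B(E_1+E_{r-1};x,y)\,B(E_1+E_{t-1};x,y)+y(x-1)\bigl((x+y)^{r-1}+(x+y)^{t-1}\bigr),
\]
and substituting the two identifications above gives the desired formula. There is no genuine obstacle here; the result is a direct corollary, and the only thing requiring a moment's care is the bookkeeping of the orders $n_1=r-1$ and $n_2=t-1$ (rather than $r$ and $t$), which is forced by the fact that one vertex of each star is absorbed into the $P_2$ under the corona construction.
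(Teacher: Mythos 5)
Your argument is correct and is essentially the paper's own: the paper likewise obtains Proposition~\ref{p:S_rt} by noting the isomorphism $S_{r,t}\simeq P_2\odot\{E_{r-1},E_{t-1}\}$ and invoking Theorem~\ref{t:DobleJoin} with $n_1=r-1$ and $n_2=t-1$, together with $E_1+E_{k-1}\simeq S_k$. Your write-up simply makes explicit the identifications the paper leaves implicit.
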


Let us analyze the distortion of the boundary polynomial by the edge subdivision for a more complete analysis under local transformations of a graph.

\begin{theorem}
Let $e$ be an edge of $G$ incident with vertices $u$ and $v$. If $G'$ is the graph that results by subdividing  $e$ once, then
\begin{equation}
\begin{aligned}
\nonumber
B(G';x,y) & = B(G;x,y)^{0,0}_{u,v}+B(G;x,y)^{1,0}_{u,v^{0}}+x\,B(G;x,y)^{1,0}_{u,v^{1}}
+ B(G;x,y)^{0,1}_{u^{0},v}+x\,B(G;x,y)^{0,1}_{u^{1},v}\\
           &+x\,B(G;x,y)^{1,1}_{u,v}+x^{2}y\,B(G;x,y)^{0,0}_{u^{0},v^{0}}+xy\,B(G;x,y)^{0,0}_{u^{1},v^{0}}+
xy\,B(G;x,y)^{0,0}_{u^{0},v^{1}}\\
           & +y\,B(G;x,y)^{0,0}_{u^{1},v^{1}}+yB(G;x,y)^{1,1}_{u,v}+
y\,B(G;x,y)^{1,0}_{u,v}+y\,B(G;x,y)^{0,1}_{u,v}.
\end{aligned}
\end{equation}
\end{theorem}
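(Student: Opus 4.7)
The plan is to analyze, subset by subset, how the outer boundary changes under subdivision. Write $w$ for the new vertex inserted into $e=uv$, so that $V(G')=V(G)\cup\{w\}$ and $E(G')=(E(G)\setminus\{uv\})\cup\{uw,wv\}$. For every $S'\subseteq V(G')$ set $S:=S'\cap V(G)$; the plan is to partition the sum defining $B(G';x,y)$ according to the indicator values ${\bf 1}_{S'}(w)$, ${\bf 1}_S(u)$, ${\bf 1}_S(v)$, producing the same $2\times 2$ pattern on $(u,v)$ used throughout the paper. The key structural observation is that the only vertices of $G'$ whose $G'$-adjacencies differ from their $G$-adjacencies are $u$, $v$ and $w$, so for every $z\in V(G)\setminus\{u,v\}$ the membership of $z$ in $B_{G'}(S')$ coincides with its membership in $B_G(S)$.

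First I would handle the cases with $w\notin S'$, where $S'=S$ and $|S'|=|S|$. If $u,v\notin S$, then $w\notin B_{G'}(S)$ and the boundary is unchanged, contributing $B(G;x,y)^{0,0}_{u,v}$. If $u\in S$ and $v\notin S$, then $w\in B_{G'}(S)$ is forced, and the delicate point is whether $v\in B_{G'}(S)$: because $uv$ has been removed, $v\in B_{G'}(S)$ if and only if $(S\setminus\{u\})\cap N(v)\neq\emptyset$. In the negative case the boundary swaps $v$ for $w$ with no change in size, giving $B(G;x,y)^{1,0}_{u,v^{0}}$; in the positive case the boundary gains $w$ with no loss, producing $xB(G;x,y)^{1,0}_{u,v^{1}}$. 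The case $u\notin S$, $v\in S$ is symmetric and yields $B(G;x,y)^{0,1}_{u^{0},v}+xB(G;x,y)^{0,1}_{u^{1},v}$. Finally, when $u,v\in S$ the vertex $w$ is a new boundary element, contributing $xB(G;x,y)^{1,1}_{u,v}$.

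Next I would treat the cases with $w\in S'$, each of which contributes an extra factor $y$ because $|S'|=|S|+1$. If $u,v\in S$ then $B_{G'}(S')=B_G(S)$, giving $yB(G;x,y)^{1,1}_{u,v}$. If $u\in S$ and $v\notin S$, then $v$ is forced into $B_{G'}(S')$ via the edge $vw$; since $v$ was already in $B_G(S)$ through $uv$, the two boundaries have the same cardinality and the contribution is $yB(G;x,y)^{1,0}_{u,v}$, with $yB(G;x,y)^{0,1}_{u,v}$ arising in the mirror case. The remaining case $u,v\notin S$ is the most intricate: both $u$ and $v$ are forced into $B_{G'}(S')$ through $w$, but whether they were already in $B_G(S)$ depends on whether $S$ meets $N(u)$ and $N(v)$. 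I would split along the four decorations $(u^i,v^j)$ and count how many of $u,v$ are newly added to the boundary, producing the terms $x^2yB(G;x,y)^{0,0}_{u^{0},v^{0}}$, $xyB(G;x,y)^{0,0}_{u^{1},v^{0}}$, $xyB(G;x,y)^{0,0}_{u^{0},v^{1}}$ and $yB(G;x,y)^{0,0}_{u^{1},v^{1}}$.

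The main obstacle is purely bookkeeping: one has to keep straight exactly when the deleted edge $uv$ is the sole witness of $u$'s or $v$'s membership in $B_G(S)$, since this alone determines whether subdivision increases $|B_{G'}(S')|-|B_G(S)|$ by $0$, $1$, or $2$. Invoking Remark~\ref{re:1} confirms that the thirteen disjoint pieces listed above exhaust the sum defining $B(G';x,y)$, and summing the contributions yields the stated identity.
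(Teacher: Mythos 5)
Your proposal is correct and follows essentially the same route as the paper's own proof: both partition the subsets $S'\subseteq V(G')$ by the membership of the new vertex $w$ and by the indicator pattern of $(u,v)$, and both resolve the delicate subcases by checking whether $S$ meets $N(u)\setminus\{v\}$ and $N(v)\setminus\{u\}$, which is exactly what the decorations $u^{0},u^{1},v^{0},v^{1}$ encode. The thirteen contributions you list coincide term by term with those in the paper.
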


\begin{proof} Let $w$ be the new subdividing vertex of $G'$.  Then $V(G')=V(G)\cup \{w\}, |V(G')|=n+1$, and
$N(w)=\{u,v\}$. Let
$S' \subseteq V(G')$, $S:= S' \setminus \{w\} \subseteq V(G)$. 

We first calculate $B(G';x,y)^{0}_{w^{0}}$.
If $u,v,w\notin S'$, then $B_{G'}(S')=B_{G}(S)$ and $|S|=|S'|$.
Then
\[
B(G';x,y)^{0}_{w^{0}}= B(G;x,y)^{0,0}_{u,v}.
\]

Similarly, if $w\in S'$ and $\{u,v\}\cap S'\neq \emptyset$, then $B_{G'}(S')=B_{G}(S)$ and $|S'|=|S|+1$.
Then
\[
B(G';x,y)^{1}_{w^{1}}= y \Big( B(G;x,y)^{1,1}_{u,v} + B(G;x,y)^{1,0}_{u,v} + B(G;x,y)^{0,1}_{u,v} \Big).
\]

We compute now $B(G';x,y)^{0}_{w^{1}}$. 
Note that if $u,v\in S'$ and $w \notin S'$, then  $B_{G'}(S')=B_{G}(S) \cup \{w\}$ and $|S|=|S'|$. 
Then
$x\,B(G;x,y)^{1,1}_{u,v}$ is an addend of $B(G';x,y)^{0}_{w^{1}}$.

Assume that $u \in S'$ and $v,w \notin S'$. If $N_{G'}(v)\cap S'=\emptyset$, then
$B_{G'}(S')=(B_{G}(S)\setminus \{v \}) \cup \{ w\}$, \emph{i.e.}, $|B_{G'}(S')|=|B_{G}(S')|$ and $|S|=|S'|$. Then $B(G;x,y)^{1,0}_{u,v^{0}}$
is an addend of $B(G';x,y)^{0}_{w^{1}}.$  
Now if $N_{G'}(v)\cap S' \neq \emptyset$, then
$B_{G'}(S')=B_{G}(S) \cup \{w \} $ and $|S|=|S'|$. Then $x\,B(G;x,y)^{1,0}_{u,v^{1}}$ is another addend of $B(G';x,y)^{0}_{w^{1}}$. Thus, by switching the roll of $u$ and $v$ we obtain
\[
 B(G';x,y)^{0}_{w^{1}} = x\,B(G;x,y)^{1,1}_{u,v} + B(G;x,y)^{1,0}_{u,v^{0}} + x\,B(G;x,y)^{1,0}_{u,v^{1}} + B(G;x,y)^{0,1}_{u^{0},v} + x\,B(G;x,y)^{0,1}_{u^{1},v}
\]

Finally, we compute $B(G';x,y)^{1}_{w^{0}}$. Hence, we have $u,v\notin S'$ and $w \in S'$. 
If $N(u)\cap S=\emptyset$ and $N(v)\cap S=\emptyset$ then  $B_{G'}(S')=B_{G}(S) \cup \{u,v\}$ and $|S'|=|S|+1$. 
Then
$x^2y\,B(G;x,y)^{0,0}_{u^0,v^0}$ is an addend of $B(G';x,y)^{1}_{w^{0}}$.
If $N(u)\cap S\neq\emptyset$ and $N(v)\cap S=\emptyset$ then  $B_{G'}(S')=B_{G}(S) \cup \{v\}$ and $|S'|=|S|+1$. 
Then
$xy\,B(G;x,y)^{0,0}_{u^1,v^0}$ is an addend of $B(G';x,y)^{1}_{w^{0}}$.
Thus, by switching the roll of $u$ and $v$ we also obtain that $xy\,B(G;x,y)^{0,0}_{u^0,v^1}$ is an addend of $B(G';x,y)^{1}_{w^{0}}$.
Hence, if $N(u)\cap S\neq\emptyset$ and $N(v)\cap S\neq\emptyset$ then  $B_{G'}(S')=B_{G}(S)$ and $|S'|=|S|+1$. 
Then
$y\,B(G;x,y)^{0,0}_{u^1,v^1}$ is an addend of $B(G';x,y)^{1}_{w^{0}}$.
Therefore, we have
\[
 B(G';x,y)^{1}_{w^{0}}= x^{2}y\,B(G;x,y)^{0,0}_{u^{0},v^{0}} +xy\,B(G;x,y)^{0,0}_{u^{1},v^{0}}+xy\,B(G;x,y)^{0,0}_{u^{0},v^{1}}
+ y\,B(G;x,y)^{0,0}_{u^{1},v^{1}}.
\]

\end{proof}

The following is an interesting (desired) result about the boundary polynomial of proper subgraphs.

\begin{theorem}\label{t:subgraph}
If $G'$ is a proper subgraph of $G$, then $B(G;x,y)\neq B(G';x,y)$.
\end{theorem}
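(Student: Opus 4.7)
The plan is to split into two cases based on whether $G'$ has strictly fewer vertices or the same vertex set with strictly fewer edges, and in each case invoke an earlier result showing that the relevant invariant is encoded in $B(G;x,y)$.

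First I would write $n = |V(G)|$, $n' = |V(G')|$, $m = |E(G)|$, $m' = |E(G')|$, and observe that since $G'$ is a proper subgraph, either $n' < n$, or else $n' = n$ and $m' < m$.

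In the first case, by Theorem \ref{t:eval}(i) we have $B(G;1,y) = (1+y)^n$ and $B(G';1,y) = (1+y)^{n'}$. Since $n \neq n'$, these two single-variable polynomials have different degrees, hence $B(G;x,y) \neq B(G';x,y)$.

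In the second case, both polynomials satisfy $B(G;1,y) = B(G';1,y) = (1+y)^n$, so I cannot separate them by the previous trick. Instead I invoke Theorem \ref{t:coef}(vi), which recovers the size as
\[
m = \tfrac{1}{2}\left.\tfrac{\mathrm{d}\bigl([y]B(G;x,y)\bigr)}{\mathrm{d}x}\right|_{x=1},
\]
and similarly for $m'$. If the two boundary polynomials were equal, this formula would force $m = m'$, contradicting $m' < m$. Hence $B(G;x,y) \neq B(G';x,y)$.

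No step here looks like a serious obstacle, since every ingredient has already been established earlier in the paper; the only subtlety is making sure the case split is exhaustive (which it is, because ``proper subgraph'' means $V(G') \subseteq V(G)$ and $E(G') \subseteq E(G)$ with strict inclusion in at least one of these, and strict inclusion of $E(G')$ with $V(G') = V(G)$ gives $m' < m$).
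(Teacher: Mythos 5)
Your proposal is correct, and the exhaustiveness of your case split is fine: a proper subgraph either drops a vertex or keeps all vertices and drops an edge. The first case is handled essentially as in the paper (the paper evaluates at $(x,y)=(1,1)$ to get $2^n\neq 2^{n'}$, you compare degrees of $(1+y)^n$ and $(1+y)^{n'}$ via Theorem \ref{t:eval}~i); these are the same idea). In the second case, however, you take a genuinely different route. The paper writes $G'=G-\{e_1,\ldots,e_r\}$ and applies Theorem \ref{th:G+e} once per removed edge, concluding that $B(G;x,y)-B(G';x,y)=(x-1)P(x,y)$ for a non-null polynomial $P$ with nonnegative coefficients, hence the difference is nonzero. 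You instead observe that the size $m$ is recoverable from the polynomial via Theorem \ref{t:coef}~vi), since $[y]B(G;x,y)=\sum_{v}x^{d(v)}$ and its derivative at $x=1$ equals $2m$; equality of the polynomials would force $m=m'$, contradicting $m'<m$. Your argument is shorter and avoids the small but real verification the paper's route requires, namely that the polynomial multiplying $(x-1)$ in Theorem \ref{th:G+e} is actually non-null and that positivity is preserved when the $r$ applications are summed. What the paper's approach buys in exchange is structural information: it exhibits the exact form of the difference $B(G;x,y)-B(G';x,y)$ as $(x-1)$ times a polynomial with nonnegative coefficients, which in particular shows the two polynomials agree on the whole line $x=1$ and quantifies how edge deletions perturb $B$. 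Both proofs are valid; yours is the more economical derivation of the bare inequality.
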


\begin{proof}
If  $|V(G')|<|V(G)|$, Theorem \ref{t:eval} $i)$ implies $B(G;1,1)\neq B(G';1,1)$, and so
$B(G;x,y)\neq B(G';x,y)$. Thus we can assume $|V(G)|=|V(G')|$ and
$E(G')\subsetneq E(G)$. Then $G$ contains a nonempty edge set $\{e_1,\ldots,e_r\}$ such that
 $G'=G-\{e_1,\ldots,e_r\}$. The result
follows from applying $r$ times the Theorem \ref{th:G+e} (one time per additional edge in $G$) since $B(G;x,y)- B(G';x,y)= (x-1)\,P(x,y)$ where $P(x,y)$ is a non-null polynomial with positive coefficients.
\end{proof}

We say that a nonempty class of graphs $\mathcal K$ is {\em characterized} by a graph polynomial $f$ if
$f(G;x) = f(H;x)$ for any $G,H\in \mathcal K$.  
The following result is a consequence of Theorem \ref{t:eval}, Corollary \ref{c:E+G} and Theorem \ref{t:subgraph}.

\begin{theorem}\label{t:Kn+En+Kn-e}
  The classes of complete, complete without one edge, empty, path, cycle, wheel, star, double star graphs are characterized by their boundary polynomials.
\end{theorem}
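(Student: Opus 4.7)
The plan is to show that for each listed family $\mathcal{K}$, if $G\in\mathcal{K}$ and $B(H;x,y)=B(G;x,y)$, then $H\simeq G$. From the polynomial equality I can read off three invariants of $H$: the order $n=|V(H)|$ via $B(H;1,y)=(1+y)^n$ (Theorem \ref{t:eval}(i)); the connectedness of $H$ via the vanishing of $B_{0,j}(H)$ for $1\le j\le n-1$ (Theorem \ref{t:coef}(ii)); and the complete degree sequence of $H$, since by Theorem \ref{t:coef}(iv) the coefficient $B_{d,1}(H)$ equals the number of vertices of degree $d$ in $H$. The strategy is to argue case by case that these invariants (supplemented by the join formula in one case) force $H\simeq G$.

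Seven of the eight families fall out quickly from the degree sequence together with connectedness. For $E_n$, $K_n$, $K_n-e$, and $S_n$ the degree sequences $0^n$, $(n-1)^n$, $(n-1)^{n-2}(n-2)^2$, and $(n-1,1^{n-1})$ are each realized by a unique graph: in particular the two vertices of degree $n-2$ in the third case must be the non-adjacent pair, and the unique vertex of degree $n-1$ in the fourth must dominate. For $P_n$ and $C_n$, the sequences $(2^{n-2},1^2)$ and $(2^n)$ combined with connectedness force $H$ to be a path or a cycle respectively, since a connected $2$-regular graph is a cycle and a connected graph with exactly two degree-$1$ vertices and the rest degree $2$ is a path. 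For the double star $S_{r,t}$ with $r,t\ge 2$, connectedness plus degree sequence $(r,t,1^{r+t-2})$ give $|E(H)|=r+t-1$ by handshaking, so $H$ is a tree; the two non-leaf vertices of a tree must be adjacent (any joining path is unique in a tree and cannot pass through a degree-$1$ vertex), pinning $H$ down as $S_{r,t}$.

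The wheel is the main obstacle, because the degree sequence $(n-1,3^{n-1})$ of $W_n$ is also realized by $E_1+H_0$ for \emph{any} $2$-regular graph $H_0$ on $n-1$ vertices, including disjoint unions of cycles. My plan is: use the unique vertex of degree $n-1$ in $H$ to write $H=E_1+H_0$ with $H_0$ a $2$-regular graph of order $n-1$; apply Corollary \ref{c:E+G} to both $W_n=E_1+C_{n-1}$ and $H=E_1+H_0$; equate the two expressions and cancel the common terms to deduce $B(H_0;x,y)=B(C_{n-1};x,y)$. Since $C_{n-1}$ is connected, Theorem \ref{t:coef}(ii) then forces $H_0$ to be connected, and a connected $2$-regular graph is a cycle, giving $H_0\simeq C_{n-1}$ and hence $H\simeq W_n$. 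The key insight, and the step requiring genuine work, is precisely the passage through Corollary \ref{c:E+G} in this wheel case; the remaining seven families are essentially immediate from the invariants listed in the first paragraph.
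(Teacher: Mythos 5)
Your proposal is correct and follows essentially the same route as the paper: both arguments extract the order, the degree sequence (Theorem~\ref{t:coef}~iv)) and connectedness (Theorem~\ref{t:coef}~ii)) from the polynomial to settle the complete, empty, star, double-star, path and cycle cases, and both handle the wheel by writing $H\simeq E_1+H_0$ and cancelling through Corollary~\ref{c:E+G} to reduce to the cycle case. The only divergence is $K_n-e$, where the paper appeals to Theorem~\ref{t:subgraph} while you argue directly that the degree sequence $(n-1)^{n-2}(n-2)^2$ has a unique realization; your version is, if anything, the tighter of the two, since Theorem~\ref{t:subgraph} by itself only excludes graphs comparable to $K_n-e$ under the subgraph relation.
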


\begin{proof}
    Note that Theorem \ref{t:eval} part iv states that the boundary polynomial of a graph encodes the degree sequence of the graph. Besides, we know that the complete, empty, star and double star graphs are characterized by its degree sequence; thus, we obtain the result for those families of graph.
    The path graph is not determined by its degree sequence, but it is characterized by its connectivity and degree sequence together; therefore, Theorem \ref{t:eval} part ii completes the result for the path graphs. 
    Consider now a graph $G$ such that $B(G;x,y)\simeq B(K_n-e;x,y)$. By Theorem \ref{t:eval} we have that $G$ has order $n$ and Theorem \ref{t:subgraph} give that $G\not\simeq K_n$ and $G$ is not isomorphic to any other graph with order $n$ other than $K_n-e$. 
    Finally, if a graph $G$ satisfies $B(G;x,y)=B(W_n;x,y)$, then $G$ has a vertex of degree $n-1$. In fact, $G\simeq E_1+H$ for some graph $H$ with order $n-1$. Then, by Corollary \ref{c:E+G} we have $B(H;x,y)=B(C_{n-1};x,y)$. Therefore, $H\simeq C_{n-1}$, and consequently, $G\simeq W_n$.
\end{proof}


\begin{figure}[h]
	\begin{center}
		\includegraphics[scale=0.5]{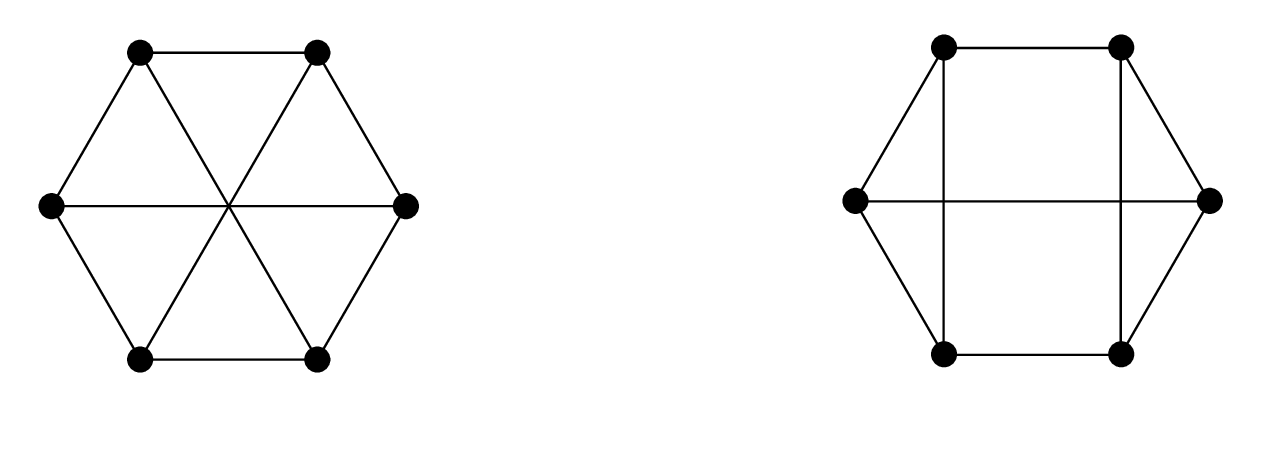}
	\end{center}
\caption{Non-isomorphic graphs with same order, size, degree sequence and boundary polynomial.} \label{fig:Cub6}
\end{figure}


The proof of the last result above gives that every graph that is determined by its degree sequence is also characterized by the boundary polynomial, and also the result is true when we add the hypothesis of being connected. Clearly, many other graphs are also characterized by the boundary polynomial; however, it is not true for all graphs. 
Figure \ref{fig:Cub6} shows two non-isomorphic graphs $G_1$ and $G_2$ with the same order, size, degree sequence, number of connected components, differential, domination number, Roman domination number, vertex connectivity, and also the same boundary polynomial. Indeed, these two graphs are distinguished by their alliance and characteristic polynomials among others.
A simple computation gives $B(K_{3,3};x,y)=B(P_2\Box C_3;x,y)=1 + 6 x^3y + 6 x^3y^2 + 9 x^4y^2 + 20 x^3y^3 + 15 x^2y^4 + 6xy^5 +y^6$. Note that this particular pair of graphs and Theorem \ref{t:join} give an infinite pairs of non-isomorphic graphs which have the same boundary polynomial, \emph{i.e.}, $G+K_{3,3}$ and $G+(P_2\Box C_3)$ for whatever graph $G$.

\end{document}